\providecommand{\U}[1]{\protect\rule{.1in}{.1in}}
\newtheorem{theorem}{Theorem}
\newtheorem{corollary}[theorem]{Corollary}
\newtheorem{example}[theorem]{Example}
\newtheorem{lemma}[theorem]{Lemma}
\newenvironment{proof}[1][Proof]{\noindent\textbf{#1.} }{\ \rule{0.5em}{0.5em}}
\begin{document}

\title{The complete set of efficient vectors for a pairwise comparison matrix }
\author{Susana Furtado\thanks{Email: sbf@fep.up.pt. The work of this author was
supported by FCT- Funda\c{c}\~{a}o para a Ci\^{e}ncia e Tecnologia, under
project UIDB/04561/2020.} \thanks{Corresponding author.}\\CMAFcIO and Faculdade de Economia \\Universidade do Porto\\Rua Dr. Roberto Frias\\4200-464 Porto, Portugal
\and Charles R. Johnson \thanks{Email: crjohn@wm.edu. }\\Department of Mathematics\\College of William and Mary\\Williamsburg, VA 23187-8795.}
\maketitle

\begin{abstract}
Efficient vectors are the natural set from which to choose a cardinal ranking
vector for a pairwise comparison matrix. Such vectors are the key to certain
business project selection models. Many ways to construct specific efficient
vectors have been proposed. Yet, no previous method to produce all efficient
vectors was known. Here, using some graph theoretic ideas, as well as a
numerical extension technique, we show how to generate inductively all
efficient vectors for any given pairwise comparison matrix. We apply this
method to give a matricial proof of the fact that the set of efficient vectors
and other related sets are piecewise linearly connected. In addition, we
determine explicitly all efficient vectors for a $4$-by-$4$ pairwise
comparison matrix. Several examples are provided.

\end{abstract}

\textbf{Keywords}: consistent matrix, decision analysis, efficient vector,
pairwise comparison matrix, semi-complete digraph, strongly connected digraph

\textbf{MSC2020}: 90B50, 91B06, 05C20, 15B48

\bigskip

\section{Introduction\label{secintr}}

\bigskip An $n$-by-$n$ entry-wise positive matrix $A=\left[  a_{ij}\right]  $
is called a \emph{pairwise comparison} matrix or a \emph{reciprocal} matrix
if, for all $1\leq i,j\leq n$, $a_{ji}=\frac{1}{a_{ij}}.$ Such a matrix
results from independent, pair-wise ratio comparisons among $n$ alternatives.
A (positive) cardinal ranking vector $w$ deduced from $A$ may be used to
assess the relative value of the alternatives, or integrated into a broader
business decision model to rank projects \cite{baj, choo, dij, golany}. Matrix
$A$ is said to be \emph{consistent} if $a_{ij}a_{jk}=a_{ik}$ for all $i,j,k.$
This is the case if and only if there is a positive vector $w=\left[
\begin{array}
[c]{ccc}%
w_{1} & \ldots & w_{n}%
\end{array}
\right]  ^{T}$ such that $a_{ij}=\frac{w_{i}}{w_{j}},$ and such a vector is
unique up to a factor of scale \cite{saaty1977}. However, consistency of the
ratio comparisons is unlikely. This suggests choosing $w$ from among vectors
"nearly" as good. A positive vector $w$ is called \emph{efficient} for $A$
\cite{blanq2006} if, for every other positive vector $v=\left[
\begin{array}
[c]{ccc}%
v_{1} & \ldots & v_{n}%
\end{array}
\right]  ^{T},$
\[
\left\vert A-\left[  \frac{v_{i}}{v_{j}}\right]  \right\vert \leq\left\vert
A-\left[  \frac{w_{i}}{w_{j}}\right]  \right\vert
\]
(entry-wise) implies $v$ is proportional to $w,$ i.e. no other consistent
matrix is clearly better than that associated with $w$ (Pareto optimality). It
is natural to choose a cardinal ranking vector from among efficient ones.
However, when $A$ is not consistent, there are infinitely many efficient
vectors for $A,$ and many ways of constructing efficient vectors have been
proposed. For example, the (Hadamard) geometric mean of all the columns of a
reciprocal matrix has been shown to be efficient \cite{blanq2006}, and, more
generally, any weighted geometric mean of the columns is efficient (in
particular any column) \cite{FJ1, FJ3}. The efficiency of the (right) Perron
eigenvector of a reciprocal matrix, the classical proposal for the ranking
vector \cite{saaty1977, Saaty}, has been studied. It is known that this vector
may not be efficient \cite{blanq2006, bozoki2014}. Classes of matrices for
which the Perron eigenvector is efficient have been identified \cite{p6, p2,
FerFur}. Generalizing these results, all the efficient vectors in some of
these classes have been described \cite{CFF, Fu22}. Several other aspects of
efficiency have been studied (see, for example, \cite{anh, baj, european}).
But, it is an open question what all the efficient vectors are for any given
reciprocal matrix \cite{p2}. This, of course, is central to the use of
reciprocal matrices.

There is, nonetheless, a way to recognize a given vector $w$ as efficient for
$A,$ based upon a certain directed graph (digraph) $G(A,w).$ This graph has a
directed edge from $i$ to $j$, $i\neq j$, if and only if $\frac{w_{i}}{w_{j}%
}-a_{ij}\geq0.$ Vector $w$ is efficient for $A$ if and only if $G(A,w)$ is
strongly connected. This does not, by itself, identify the set of all
efficient vectors for $A,$ which we denote by $\mathcal{E}(A).$ However, we
develop here a way to generate all of $\mathcal{E}(A).$

Notice that $G(A,w)$ has at least one edge in one direction between $i$ and
$j,$ $i\neq j$ (it may have both if $w_{i}=a_{ij}w_{j}$). Such digraph is
called \emph{semi-complete}. Of course, an induced subgraph of a semi-complete
digraph is also semi-complete.

It is a natural question when an entry-deleted sub-vector of an efficient
vector for $A$ is efficient for the corresponding principal submatrix of $A.$
In general this need not happen, as will be seen in some of our examples.
However, here we explain how this happens, and this is partly the basis of how
we inductively generate all efficient vectors for a given reciprocal matrix.

Here are the advances we offer. Using the fact that each semi-complete,
strongly connected digraph on $n\geq4$ vertices has at least $2$
(semi-complete) strongly connected induced subgraphs on $n-1$ vertices, we
observe that there is an entry-deleted sub-vector of an efficient vector that
is efficient for the corresponding principal submatrix. This observation then
allows us to go in reverse from smaller matrices/vectors to larger ones. If a
vertex is added to a semi-complete, strongly connected digraph and the vertex
is then connected to each prior vertex in the appropriate direction, we may
assess which resulting semi-complete digraphs are strongly connected. When
this happens, we show how to extend a vector $w$ for which $G(A,w)$ is the
former graph to a vector $w^{\prime}$ for which $G(A^{\prime},w^{\prime})$ is
the latter graph for a given reciprocal matrix $A^{\prime}$ of which $A\ $is
the appropriate principal submatrix. By considering the right possibilities,
we show that all efficient vectors for a given reciprocal matrix may be
generated. We use this result to present a matricial proof of the fact that
the set of efficient vectors for a reciprocal matrix, as well as some other
related sets, are piecewise linearly connected. The connectivity of
$\mathcal{E}(A)$ was stated in \cite{blanq2006} (see also \cite{carrizoza1995}%
) using a general result from optimization.An application of the developed
method to generate the efficient vectors to the case of $4$-by-$4$ reciprocal
matrices is provided.Examples are given, including some explicit cases.

In the next section we review some necessary background on efficient vectors
and make some useful new observations. Then, in Section \ref{s4}, we describe
the graph theoretic result that underlay our ideas. In Section \ref{s44} we
give our method to generate all elements of $\mathcal{E}(A)$ for any
reciprocal matrix $A$ and use it in Section \ref{s5} to study the connectivity
of $\mathcal{E}(A)$ and other sets related to it. In Section \ref{sec4x4} we
apply our method to generate all efficient vectors for any $4$-by-$4$
reciprocal matrix. We conclude the paper with some observations in Section
\ref{s6}.

\section{Background}

In this section we introduce some notation and give some known results that
will be helpful in the paper.

We denote by $\mathcal{PC}_{n}$ the set of $n$-by-$n$ reciprocal matrices and
by $\mathcal{V}_{n}$ the set of positive $n$-vectors. Given $w\in
\mathcal{V}_{n},$ we denote by $w_{i}$ the $i$th entry of $w.$

For an $n$-by-$n$ matrix $A=[a_{ij}],$ the principal submatrix of $A$
determined by deleting (by retaining) the rows and columns indexed by a subset
$K\subset\{1,\ldots,n\}$ is denoted by $A(K)$ $(A[K]);$ we abbreviate
$A(\{i\})$ as $A(i).$ Similarly, if $w\in\mathcal{V}_{n},$ we denote by $w(K)$
($w[K]$) the vector obtained from $w$ by deleting (by retaining) the entries
indexed by $K$ and abbreviate $w(\{i\})$ as $w(i)$. Note that, if $A$ is
reciprocal (consistent) then so are $A(K)$ and $A[K].$

\bigskip

Next, we give a helpful result that concerns how $\mathcal{E}(A)$ changes when
$A$ is subjected to either a positive diagonal similarity or a permutation
similarity, or both (a monomial similarity) \textrm{\cite{Fu22}}.

\begin{lemma}
\label{lsim} Suppose that $A\in\mathcal{PC}_{n}$ and $w\in\mathcal{E}(A).$ If
$D$ is a positive diagonal matrix ($P$ is a permutation matrix), then
$DAD^{-1}\in\mathcal{PC}_{n}$ and $Dw\in\mathcal{E}(DAD^{-1})$ ($PAP^{T}%
\in\mathcal{PC}_{n}$ and $Pw\in\mathcal{E}(PAP^{T})$).
\end{lemma}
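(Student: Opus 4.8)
The plan is to reduce everything to the efficiency characterization via the digraph $G(A,w)$ and the defining Pareto-optimality inequality, and to handle the diagonal and permutation cases separately before combining them. The key observation is that both transformations are invertible within the relevant classes: if $D$ is a positive diagonal matrix then $DAD^{-1}$ is again entry-wise positive and reciprocal, since its $(i,j)$ entry is $\frac{d_i}{d_j}a_{ij}$ and the reciprocal relation $a_{ji}=\frac{1}{a_{ij}}$ is preserved; similarly $PAP^T$ merely relabels indices and so stays in $\mathcal{PC}_n$. These closure facts are immediate and I would dispatch them first in one or two lines.

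For the diagonal case I would argue directly from the definition of efficiency. Suppose $w\in\mathcal{E}(A)$ and let $B=DAD^{-1}$ with $D=\operatorname{diag}(d_1,\dots,d_n)$. I want to show $Dw\in\mathcal{E}(B)$. The natural idea is to set up a bijection between competitor vectors: given any positive vector $v$ that beats $Dw$ for $B$, I would consider $D^{-1}v$ as a competitor for $w$ against $A$. The crucial algebraic point is that the comparison matrix associated to a vector scales consistently, namely $\left[\frac{(Dw)_i}{(Dw)_j}\right]=D\left[\frac{w_i}{w_j}\right]D^{-1}$, and likewise $B=D\left[a_{ij}\right]D^{-1}$. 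Then the entry-wise absolute difference transforms as $\left\vert B-\left[\frac{v_i}{v_j}\right]\right\vert$, whose $(i,j)$ entry equals $\frac{d_i}{d_j}$ times the corresponding entry of $\left\vert A-\left[\frac{(D^{-1}v)_i}{(D^{-1}v)_j}\right]\right\vert$. Since each factor $\frac{d_i}{d_j}$ is strictly positive, the entry-wise inequality $\left\vert B-\left[\frac{v_i}{v_j}\right]\right\vert\leq\left\vert B-\left[\frac{(Dw)_i}{(Dw)_j}\right]\right\vert$ holds if and only if the analogous inequality holds for $A$ with competitor $D^{-1}v$ against $w$. Efficiency of $w$ then forces $D^{-1}v$ proportional to $w$, hence $v$ proportional to $Dw$, giving efficiency of $Dw$.

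For the permutation case the argument is even cleaner, since $P$ only permutes coordinates: the map $v\mapsto Pv$ is a bijection on positive vectors, $PAP^T$ has entries $a_{\sigma^{-1}(i)\,\sigma^{-1}(j)}$ for the permutation $\sigma$ realized by $P$, and $\left[\frac{(Pw)_i}{(Pw)_j}\right]=P\left[\frac{w_i}{w_j}\right]P^T$. Consequently the entry-wise comparison for $PAP^T$ against $Pw$ is just a simultaneous row-and-column permutation of the comparison for $A$ against $w$, so the partial order $\leq$ is preserved, and proportionality is preserved under $P$. The monomial (combined) case follows by composing the two, or equivalently by applying $D$ first and then $P$ and noting both preserve $\mathcal{E}$.

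I do not expect a genuine obstacle here; the statement is a structural invariance and the content is bookkeeping. The one point requiring care is the logic of the biconditional: I must verify that the correspondence $v\leftrightarrow D^{-1}v$ (resp. $v\leftrightarrow P^Tv$) sends \emph{every} competitor for the transformed problem to a competitor for the original and vice versa, so that no efficient-defeating vector is lost in translation; this is exactly what the strict positivity of the diagonal scaling factors guarantees. I would present the diagonal argument in full and then remark that the permutation case is the same with $\frac{d_i}{d_j}$ replaced by the identity permutation of entries, thereby avoiding repetition.
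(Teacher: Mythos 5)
Your argument is correct: the closure of $\mathcal{PC}_{n}$ under positive diagonal and permutation similarity is immediate, and the bijection $v\leftrightarrow D^{-1}v$ (resp.\ $v\leftrightarrow P^{T}v$) between competitors, together with the identity $\bigl\vert DAD^{-1}-[v_{i}/v_{j}]\bigr\vert_{ij}=\frac{d_{i}}{d_{j}}\bigl\vert A-[(D^{-1}v)_{i}/(D^{-1}v)_{j}]\bigr\vert_{ij}$ and the strict positivity of the factors $\frac{d_{i}}{d_{j}}$, correctly transports the Pareto-domination relation and proportionality in both directions. The paper itself states this lemma without proof, citing \cite{Fu22}; your direct verification from the definition of efficiency is the standard argument and is complete (the opening mention of the digraph $G(A,w)$ is never actually needed and could be dropped).
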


\bigskip In \cite{blanq2006} the authors proved a useful result that gives a
characterization of efficiency in terms of a certain digraph (see also
\cite{FJ3}). Given $A\in\mathcal{PC}_{n}$ and a positive vector $w=\left[
\begin{array}
[c]{ccc}%
w_{1} & \cdots & w_{n}%
\end{array}
\right]  ^{T}$, define $G(A,w)$ as the digraph with vertex set $\{1,\ldots
,n\}$ and a directed edge $i\rightarrow j,$ $i\neq j,$ if and only if,
$\frac{w_{i}}{w_{j}}\geq a_{ij}$.

\begin{theorem}
\textrm{\cite{blanq2006}}\label{blanq} Let $A\in\mathcal{PC}_{n}$ and
$w\in\mathcal{V}_{n}.$ The vector $w$ is efficient for $A$ if and only if
$G(A,w)$ is a strongly connected digraph, that is, for all pairs of vertices
$i,j,$ with $i\neq j,$ there is a directed path from $i$ to $j$ in $G(A,w)$.
\end{theorem}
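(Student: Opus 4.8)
The plan is to prove both implications by contraposition, exploiting the feature (noted in the excerpt) that $G(A,w)$ is semi-complete: between any two distinct vertices at least one directed edge is present, so the absence of the edge $i\rightarrow j$ forces the presence of $j\rightarrow i$. Throughout I write the entrywise error as $\left\vert a_{ij}-\frac{w_{i}}{w_{j}}\right\vert $, and I recall that $w$ fails to be efficient precisely when there is a positive $v$ not proportional to $w$ with $\left\vert a_{ij}-\frac{v_{i}}{v_{j}}\right\vert \leq\left\vert a_{ij}-\frac{w_{i}}{w_{j}}\right\vert $ for all $i,j$. The key geometric observation is that rescaling a block of coordinates of $w$ by a common factor leaves all ratios internal to the block (and internal to its complement) unchanged, while sliding exactly the cross ratios in a single controlled direction.

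For the direction ``not strongly connected $\Rightarrow$ not efficient,'' I would pass to the condensation of $G(A,w)$ into strongly connected components to produce a partition $\{1,\ldots,n\}=S\cup T$ into nonempty sets with no edge directed from $T$ into $S$. By semi-completeness, for every $i\in S$ and $j\in T$ the only edge present is $i\rightarrow j$, and it must be strict, i.e. $\frac{w_{i}}{w_{j}}>a_{ij}$, since equality would produce edges in both directions. I then perturb $w$ by setting $v_{i}=(1-\varepsilon)w_{i}$ for $i\in S$ and $v_{i}=w_{i}$ for $i\in T$. Ratios within $S$ and within $T$ are unchanged; for sufficiently small $\varepsilon>0$ each cross ratio moves strictly toward its target $a_{ij}$ without overshooting, so no error increases and $v$ is not proportional to $w$. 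Hence $w$ is not efficient.

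For ``not efficient $\Rightarrow$ not strongly connected,'' I would take a dominating $v$ (not proportional to $w$, with all errors non-increasing) and set $r_{i}=v_{i}/w_{i}$; since $v$ is not proportional to $w$, the $r_{i}$ are not all equal. Let $S=\{i:r_{i}=\max_{k}r_{k}\}$, a proper nonempty subset. For $i\in S$ and $j\notin S$ one has $\frac{v_{i}}{v_{j}}=\frac{r_{i}}{r_{j}}\frac{w_{i}}{w_{j}}>\frac{w_{i}}{w_{j}}$. If the edge $i\rightarrow j$ were present, then $a_{ij}\leq\frac{w_{i}}{w_{j}}<\frac{v_{i}}{v_{j}}$, which forces $\left\vert a_{ij}-\frac{v_{i}}{v_{j}}\right\vert >\left\vert a_{ij}-\frac{w_{i}}{w_{j}}\right\vert $, contradicting domination. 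So no edge leaves $S$; by semi-completeness every edge between $S$ and its complement enters $S$, whence no directed path escapes $S$ and $G(A,w)$ is not strongly connected.

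The part I expect to require the most care is the bookkeeping of strict versus weak inequalities, in particular the equality case $\frac{w_{i}}{w_{j}}=a_{ij}$: this is what decides whether an edge is one-directional or bidirectional, and it governs how small $\varepsilon$ must be chosen in the perturbation so that no cross ratio overshoots its target $a_{ij}$ and thereby turns a decrease in error into an increase. The underlying idea is elementary, but verifying that every error is genuinely non-increasing, and that the two constructions are exact converses of each other through the same max-ratio/block-rescaling mechanism, is where the argument must be pinned down.
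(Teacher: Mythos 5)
Your proof is correct. Note first that the paper itself offers no proof of this theorem: it is quoted from Blanquero--Carrizosa--Conde \cite{blanq2006} as a known characterization, so there is no ``paper's proof'' to compare against line by line. Judged on its own, your argument is sound and is essentially the standard one. In the direction ``not strongly connected $\Rightarrow$ not efficient,'' taking $S$ to be a source component of the condensation gives a nonempty proper $S$ with no edge from $T$ into $S$; semi-completeness then forces, for all $i\in S$, $j\in T$, the strict inequality $\frac{w_i}{w_j}>a_{ij}$ (equality would create the reverse edge), and since there are finitely many such pairs a single sufficiently small $\varepsilon$ makes the block rescaling $v_i=(1-\varepsilon)w_i$ on $S$ strictly decrease every cross error on both the $(i,j)$ and the reciprocal $(j,i)$ entries while leaving intra-block errors untouched. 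In the converse direction, the set $S=\{i: v_i/w_i=\max_k v_k/w_k\}$ is nonempty and proper, and any edge leaving $S$ would give $a_{ij}\leq \frac{w_i}{w_j}<\frac{v_i}{v_j}$ and hence a strictly increased error, contradicting domination; so no edge leaves $S$ and strong connectivity fails. The two constructions are indeed dual through the max-ratio/block-rescaling mechanism, exactly as you anticipated, and the only care needed is the uniform choice of $\varepsilon$, which you address.
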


\bigskip As mentioned in Section \ref{secintr}, $G(A,w)$ is a semi-complete
digraph (that is, has one edge between any two distinct vertices in at least
one direction).

\bigskip

Any matrix in $\mathcal{PC}_{2}$ is consistent and the efficient vectors are
the positive multiples of any of its columns. In \cite{CFF} the description of
the efficient vectors for a reciprocal matrix obtained from a consistent
matrix by modifying one entry above the main diagonal, and the corresponding
reciprocal entry below the main diagonal, was given. Such matrices were called
in \cite{p6} \emph{simple perturbed consistent matrices}. Later, in
\cite{Fu22}, an answer to the problem when $A$ is a reciprocal matrix obtained
from a consistent one by modifying at most two entries above de main diagonal,
and the corresponding reciprocal entries, was presented.

\bigskip

If $A\in\mathcal{PC}_{n}$ is a simple perturbed consistent matrix, there is a
positive diagonal matrix $D$ such that $A=DS_{n,k,l}(x)D^{-1},$ for some $x>0$
and some $1\leq k<l\leq n.$ Here, $S_{n,k,l}(x)$ denotes the reciprocal matrix
with all entries equal to $1$ except those in positions $(k,l)$ and $(l,k),$
which are $x$ and $\frac{1}{x},$ respectively. By Lemma \ref{lsim}, $w$ is an
efficient vector for $S_{n,k,l}(x)$ if and only if $Dw$ is an efficient vector
for $A.$

\begin{theorem}
\cite{CFF}\label{tmain} Let $n\geq3,$ $x>0$ and $D=\operatorname*{diag}%
(d_{1},\ldots,d_{n})$ be a positive diagonal matrix$.$ Let $A=DS_{n,k,l}%
(x)D^{-1},$ $k<l.$ Then, a vector $w\in\mathcal{V}_{n}$ is efficient for $A$
if and only if
\[
\frac{w_{l}}{d_{l}}\leq\frac{w_{i}}{d_{i}}\leq\frac{w_{k}}{d_{k}}\leq
\frac{w_{l}}{d_{l}}x,\text{ for }i=1,\ldots,n,\text{ }i\neq k,l
\]
or%
\[
\frac{w_{l}}{d_{l}}\geq\frac{w_{i}}{d_{i}}\geq\frac{w_{k}}{d_{k}}\geq
\frac{w_{l}}{d_{l}}x,\text{ for }i=1,\ldots,n,\text{ }i\neq k,l.
\]

\end{theorem}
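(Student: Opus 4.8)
The plan is to reduce to the case $D=I$ and translate everything into the digraph criterion of Theorem \ref{blanq}. Writing $v_i=w_i/d_i$, the diagonal part of Lemma \ref{lsim} shows that $w$ is efficient for $A=DS_{n,k,l}(x)D^{-1}$ if and only if $v$ is efficient for $S:=S_{n,k,l}(x)$, and the two displayed chains become $v_l\le v_i\le v_k\le xv_l$ and $v_l\ge v_i\ge v_k\ge xv_l$ (for $i\ne k,l$). So it suffices to characterize the efficient vectors of $S$. Moreover, conjugating $S_{n,k,l}(x)$ by the permutation that transposes the indices $k$ and $l$ yields $S_{n,k,l}(1/x)$ and interchanges the two chains; by the permutation part of Lemma \ref{lsim} this reduces the problem to the case $x\ge 1$, where the second chain is infeasible. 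Thus I must show that $v$ is efficient for $S$ if and only if $v_l\le v_i\le v_k\le xv_l$ for all $i\ne k,l$.

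By Theorem \ref{blanq}, efficiency of $v$ is equivalent to strong connectivity of $G:=G(S,v)$. I would first record the edges explicitly: for a pair $\{i,j\}\ne\{k,l\}$ one has $i\rightarrow j$ iff $v_i\ge v_j$, while between $k$ and $l$ one has $k\rightarrow l$ iff $v_k\ge xv_l$ and $l\rightarrow k$ iff $v_k\le xv_l$. Hence all edges follow the order of the entries of $v$, except that the single pair $\{k,l\}$ is governed by the threshold $xv_l$. For sufficiency, assume the chain holds. Then $v_k=\max_j v_j$, $v_l=\min_j v_j$, and $v_k\le xv_l$ supplies the back-edge $l\rightarrow k$; consequently $k\rightarrow i$ for every $i\ne k,l$, and $i\rightarrow l$ for every $i\ne k,l$. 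Every vertex therefore lies on a cycle $i\rightarrow l\rightarrow k\rightarrow i$, so $G$ is strongly connected.

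For necessity I would use the standard fact that a semi-complete digraph is strongly connected if and only if it has no nonempty proper vertex subset $U$ all of whose boundary edges leave $U$ (equivalently, none enter $U$). Applying this to the set $T=\{i:v_i=\max_j v_j\}$: every non-special edge from $T$ points outward, so an edge can enter $T$ only through the special pair, which forces $k\in T$ and $v_k\le xv_l$; otherwise $T$ would be a dominant set, contradicting strong connectivity. Dually, applying the criterion to $B=\{i:v_i=\min_j v_j\}$, now requiring an edge to leave $B$, forces $l\in B$, i.e. $v_l=\min_j v_j$; here the edge $l\rightarrow k$ guaranteed by $v_k\le xv_l$ is exactly what keeps $B$ from being a set with no outgoing edge. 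Combining $v_k=\max_j v_j$, $v_l=\min_j v_j$, and $v_k\le xv_l$ yields the chain. The degenerate all-equal case, where $T$ or $B$ is the whole vertex set, corresponds to $x=1$ with $v$ constant and is checked directly.

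I expect the necessity direction to be the main obstacle, chiefly the bookkeeping of ties (several entries equal to the maximum or minimum) and the boundary case $v_k=xv_l$, where the special pair carries edges in both directions; phrasing the argument through the extremal sets $T$ and $B$ and the no-dominant-set criterion is what makes these cases uniform. I would also verify the symmetry reduction $x\mapsto 1/x$ carefully, since it is precisely what allows me to treat only the first chain.
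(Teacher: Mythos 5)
The paper does not prove Theorem \ref{tmain}; it is quoted from \cite{CFF} as an external result, so there is no internal proof to compare against. Judged on its own, your argument is essentially correct and self-contained, using only tools the paper already has: the monomial-similarity reduction via Lemma \ref{lsim} to $S_{n,k,l}(x)$ with $x\geq 1$ (the transposition of $k$ and $l$ does carry $S_{n,k,l}(x)$ to $S_{n,k,l}(1/x)$ and swaps the two chains, as you claim), the explicit edge description of $G(S_{n,k,l}(x),v)$, the three-cycles $i\rightarrow l\rightarrow k\rightarrow i$ for sufficiency, and the extremal sets $T$ and $B$ with the no-dominating-set criterion for necessity. Two small points should be tightened. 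First, in the necessity step for $T=\{i:v_i=\max_j v_j\}$ you must explicitly exclude the alternative entering edge $k\rightarrow l$ with $l\in T$, $k\notin T$: there it would force $v_k\geq xv_l\geq v_l>v_k$ (using $x\geq1$), a contradiction, and only then does $k\in T$ together with $v_k\leq xv_l$ follow; a symmetric one-liner is needed for $B$. Second, the degenerate all-equal case is not confined to $x=1$: a constant vector is efficient for $S_{n,k,l}(x)$ for every $x\geq1$ (the edge $l\rightarrow k$ plus the bidirectional non-special edges give strong connectivity), and it satisfies the first chain trivially, so the statement survives, but your parenthetical tying this case to $x=1$ is inaccurate and should be corrected. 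With those repairs the proof is complete.
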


\bigskip Any matrix $A\in\mathcal{PC}_{3}$ is a simple perturbed consistent
matrix. Thus, all the efficient vectors for $A\in\mathcal{PC}_{3}$ are
obtained from Theorem \ref{tmain} (in fact, they can be obtained by a direct
inspection of all the strongly connected digraphs $G(A,w)$ with $w\in
\mathcal{V}_{3}$).

\begin{corollary}
\label{c3por3}Let
\[
\left[
\begin{array}
[c]{ccc}%
1 & a_{12} & a_{13}\\
\frac{1}{a_{12}} & 1 & a_{23}\\
\frac{1}{a_{13}} & \frac{1}{a_{23}} & 1
\end{array}
\right]  .
\]
Then, $w\in\mathcal{V}_{3}$ is efficient for $A$ if and only if%
\[
a_{23}w_{3}\leq w_{2}\leq\frac{w_{1}}{a_{12}}\leq\frac{a_{13}}{a_{12}}%
w_{3}\text{\qquad or \qquad}a_{23}w_{3}\geq w_{2}\geq\frac{w_{1}}{a_{12}}%
\geq\frac{a_{13}}{a_{12}}w_{3}.
\]

\end{corollary}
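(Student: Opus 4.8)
The plan is to specialize Theorem~\ref{tmain} to the case $n=3$ and match the resulting inequalities to the claimed ones. The key observation is that any $A\in\mathcal{PC}_3$ is a simple perturbed consistent matrix, so there exist a positive diagonal $D=\operatorname*{diag}(d_1,d_2,d_3)$, a scalar $x>0$, and indices $k<l$ with $A=DS_{3,k,l}(x)D^{-1}$. First I would determine $D$, $x$, $k$, and $l$ explicitly in terms of $a_{12},a_{13},a_{23}$. The natural choice is to take the perturbed position to be $(1,3)$, i.e.\ $k=1$, $l=3$, since then the off-diagonal entries of $DS_{3,1,3}(x)D^{-1}$ in positions $(1,2)$ and $(2,3)$ come purely from the diagonal scaling (those entries of $S_{3,1,3}(x)$ are $1$), while position $(1,3)$ absorbs the perturbation $x$. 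Matching $\frac{d_1}{d_2}=a_{12}$, $\frac{d_2}{d_3}=a_{23}$, and $\frac{d_1}{d_3}x=a_{13}$ gives $x=\frac{a_{13}}{a_{12}a_{23}}$ and a consistent solution for the $d_i$ up to the common scale factor (which is irrelevant since the conditions in Theorem~\ref{tmain} are homogeneous in $D$).

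Next I would substitute these values into the two chains of inequalities from Theorem~\ref{tmain}. With $k=1$, $l=3$, the single index $i\neq k,l$ is $i=2$, so each chain reduces to the three-term condition $\frac{w_3}{d_3}\le\frac{w_2}{d_2}\le\frac{w_1}{d_1}\le\frac{w_3}{d_3}x$ (and the reversed chain). Choosing the scale so that, say, $d_3=1$, $d_2=a_{23}$, and $d_1=a_{12}a_{23}$, I would rewrite each term: $\frac{w_3}{d_3}=w_3$, $\frac{w_2}{d_2}=\frac{w_2}{a_{23}}$, $\frac{w_1}{d_1}=\frac{w_1}{a_{12}a_{23}}$, and $\frac{w_3}{d_3}x=w_3\frac{a_{13}}{a_{12}a_{23}}$. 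Clearing the common positive factor $\frac{1}{a_{23}}$ from the middle two expressions (and rescaling the outer ones to match) should collapse the chain to exactly $a_{23}w_3\le w_2\le\frac{w_1}{a_{12}}\le\frac{a_{13}}{a_{12}}w_3$, with the reversed chain giving the second disjunct in the corollary.

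The verification is essentially bookkeeping, but the step requiring the most care will be confirming that the specific assignment of $d_1,d_2,d_3$ used to simplify the middle inequalities is globally consistent across all three terms of the chain simultaneously, rather than being tuned to make just one inequality look right. Concretely, I would check that multiplying the whole chain $\frac{w_3}{d_3}\le\frac{w_2}{d_2}\le\frac{w_1}{d_1}\le\frac{w_3}{d_3}x$ through by the positive constant $a_{23}$ and substituting the chosen $d_i$ and $x$ produces, term by term, $a_{23}w_3\le w_2\le\frac{a_{23}}{a_{12}a_{23}}w_1=\frac{w_1}{a_{12}}\le a_{23}\cdot\frac{a_{13}}{a_{12}a_{23}}w_3=\frac{a_{13}}{a_{12}}w_3$, which is precisely the first chain in the statement. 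I would then note that the symmetric computation applied to the reversed inequality chain in Theorem~\ref{tmain} yields the second alternative. As the excerpt already remarks, this also follows by direct inspection of the strongly connected semi-complete digraphs $G(A,w)$ on three vertices via Theorem~\ref{blanq}, which furnishes an independent sanity check on the final inequalities.
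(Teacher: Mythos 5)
Your proposal is correct and follows essentially the same route as the paper: both identify $A=DS_{3,1,3}\bigl(\tfrac{a_{13}}{a_{12}a_{23}}\bigr)D^{-1}$ and read off the inequalities from Theorem~\ref{tmain}; your $D$ differs from the paper's only by the irrelevant overall scale factor $a_{23}$. The bookkeeping you describe (multiplying the chain through by $a_{23}$) checks out term by term, so there is no gap.
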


\begin{proof}
We have
\[
A=DS_{3,1,3}\left(  \frac{a_{13}}{a_{12}a_{23}}\right)  D^{-1},
\]
with
\[
D=\left[
\begin{array}
[c]{ccc}%
a_{12} & 0 & 0\\
0 & 1 & 0\\
0 & 0 & \frac{1}{a_{23}}%
\end{array}
\right]  \text{\qquad and\qquad}S_{3,1,3}\left(  \frac{a_{13}}{a_{12}a_{23}%
}\right)  =\left[
\begin{array}
[c]{ccc}%
1 & 1 & \frac{a_{13}}{a_{12}a_{23}}\\
1 & 1 & 1\\
\frac{a_{12}a_{23}}{a_{13}} & 1 & 1
\end{array}
\right]  .
\]
Now the claim is an immediate consequence of Theorem \ref{tmain}.
\end{proof}

\bigskip

For our inductive construction of all the efficient vectors of a reciprocal
matrix, we will use the next important result, presented in \cite{FJ1} (see
also \cite{CFF}), which gives necessary and sufficient conditions for an
efficient vector for a matrix $A\in\mathcal{PC}_{n}$ to be an extension of an
efficient vector for an $(n-1)$-by-$(n-1)$ principal submatrix of $A.$

\begin{theorem}
\cite{FJ1}\label{thext} Let $A\in\mathcal{PC}_{n},$ $w\in\mathcal{V}_{n}$ and
$k\in\{1,\ldots,n\}$. Suppose that $w(k)\in\mathcal{E}(A(k)).$ Then,
$w\in\mathcal{E}(A)$ if and only if
\[
\min_{1\leq i\leq n,\text{ }i\neq k}\frac{w_{i}}{a_{ik}}\leq w_{k}\leq
\max_{1\leq i\leq n,\text{ }i\neq k}\frac{w_{i}}{a_{ik}}.
\]

\end{theorem}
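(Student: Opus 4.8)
The goal is to characterize when an efficient vector $w(k)\in\mathcal{E}(A(k))$ extends to an efficient vector $w\in\mathcal{E}(A)$. By Theorem \ref{blanq}, efficiency is equivalent to strong connectivity of the associated digraph, so the plan is to translate everything into the language of $G(A,w)$ and $G(A(k),w(k))$ and work there.

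First I would observe that $G(A(k),w(k))$ is exactly the induced subgraph of $G(A,w)$ obtained by deleting vertex $k$: the edge between two vertices $i,j$ (both different from $k$) is determined by the comparison of $\frac{w_i}{w_j}$ with $a_{ij}$, which is unaffected by the presence of vertex $k$. Thus the hypothesis $w(k)\in\mathcal{E}(A(k))$ says precisely that $G(A,w)$ becomes strongly connected once vertex $k$ is removed. The task reduces to determining when adding vertex $k$ back, together with its incident edges, preserves strong connectivity of the whole digraph. Since $G(A,w)$ is semi-complete, vertex $k$ has, for each $i\neq k$, at least one edge to or from $i$; the direction of the edge $i\rightarrow k$ is governed by $\frac{w_i}{w_k}\geq a_{ik}$, i.e. $w_k\leq \frac{w_i}{a_{ik}}$, and the edge $k\rightarrow i$ by $w_k\geq \frac{w_i}{a_{ik}}$.

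The key combinatorial point is that, given the rest of the digraph is already strongly connected on $\{1,\ldots,n\}\setminus\{k\}$, the augmented digraph is strongly connected if and only if vertex $k$ has at least one incoming edge and at least one outgoing edge. Indeed, if $k$ has an edge from some $i$ and an edge to some $j$, then any vertex can reach $k$ (route to $i$ inside the strongly connected part, then $i\rightarrow k$) and $k$ can reach any vertex (via $k\rightarrow j$, then inside the strongly connected part); conversely, if $k$ had no outgoing edge it could reach nobody, and symmetrically for no incoming edge. Translating the existence of an incoming edge to $k$ as ``there exists $i$ with $w_k\leq \frac{w_i}{a_{ik}}$,'' which is equivalent to $w_k\leq \max_{i\neq k}\frac{w_i}{a_{ik}}$, and the existence of an outgoing edge as $w_k\geq \min_{i\neq k}\frac{w_i}{a_{ik}}$, yields exactly the stated inequality.

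The main obstacle is making the combinatorial lemma airtight, in particular handling the degenerate directions correctly. When $w_k=\frac{w_i}{a_{ik}}$ for some $i$, there are edges in both directions between $i$ and $k$, so the boundary cases of the inequalities (the weak $\leq$ signs) must be shown to still give at least one in-edge and one out-edge; the equivalences above are phrased so that equality at the max or the min still supplies the needed edge, so the weak inequalities are exactly right. I would also confirm that no case is lost when the max and min are attained at the same index, or when all the $\frac{w_i}{a_{ik}}$ coincide, so that the interval $\left[\min_i \frac{w_i}{a_{ik}},\,\max_i \frac{w_i}{a_{ik}}\right]$ is nonempty and the characterization holds even when it degenerates to a single point. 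Once the reachability argument and these boundary checks are in place, the theorem follows immediately from Theorem \ref{blanq}.
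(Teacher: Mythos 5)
The paper does not prove Theorem \ref{thext}; it imports it from \cite{FJ1} without proof, so there is no internal argument to compare against. Your proof is correct and complete: the identification $G(A(k),w(k))=G(A,w)(k)$, the reduction to ``vertex $k$ needs at least one in-edge and at least one out-edge when re-attached to a strongly connected induced subgraph,'' and the translation of those two conditions into $w_k\leq\max_{i\neq k}\frac{w_i}{a_{ik}}$ and $w_k\geq\min_{i\neq k}\frac{w_i}{a_{ik}}$ via the reciprocal relation $a_{ki}=1/a_{ik}$ are all handled correctly, including the boundary cases where equality puts edges in both directions. This digraph argument through Theorem \ref{blanq} is exactly the mechanism the paper itself relies on elsewhere (e.g.\ in Theorem \ref{tnbyn}), so the approach is the natural one and nothing is missing.
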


\section{Subgraphs of a strongly connected semi-complete digraph \label{s4}}

In this section we notice an important property of semi-complete digraphs. We
use the following notation: by $G=G_{n,E}$ we mean the digraph with vertex set
$V=\{1,\ldots,n\}$ and edge set $E$; $G(i),$ $i=1,\ldots,n,$ is the subgraph
of $G$ induced by the set of vertices $V\backslash\{i\};$ $R(G)$ is the
adjacency matrix of $G$. Note that $R(G)(i)$ is the adjacency matrix of
$G(i).$

The following conditions are equivalent \cite{HJ}: $G$ is strongly connected;
$R(G)$ is irreducible; $(I_{n}+R(G))^{n-1}$ is positive.

\bigskip Here, we state that, if $n\geq4$ and $G$ is a strongly connected
semi-complete digraph, then there are at least two strongly connected induced
subgraphs of $G$ with $n-1$ vertices. However, we first observe that, if
$n=3,$ it may happen that all induced subgraphs of $G$ with $2$ vertices are
not strongly connected. In fact, this always happens if $i\rightarrow j$ being
an edge in $G$ implies that $j\rightarrow i$ is not an edge in $G,$
$i,j\in\{1,2,3\}.$

\begin{example}
Let $G$ be the strongly connected semi-complete digraph with $3$ vertices such
that%
\[
R(G)=\left[
\begin{array}
[c]{ccc}%
0 & 1 & 0\\
0 & 0 & 1\\
1 & 0 & 0
\end{array}
\right]  .
\]
The $2$-by-$2$ principal submatrices of $R(G)$ are of the form%
\[
\left[
\begin{array}
[c]{cc}%
0 & 1\\
0 & 0
\end{array}
\right]  \text{ and }\left[
\begin{array}
[c]{cc}%
0 & 0\\
1 & 0
\end{array}
\right]  ,
\]
which are reducible. This implies that none of the three induced subgraphs of
$G$ with $2$ vertices is strongly connected.
\end{example}

The following result can be found in \cite[Theorem 1.5.3]{Jensen}.

\begin{lemma}
\cite{Jensen}\label{Moon}Let $G=G_{n,E}$, $n\geq3,$ be a strongly connected
semi-complete digraph. Then, each vertex of $G$ is contained in some directed
cycle of length $k$, for all $k=3,\ldots,n.$
\end{lemma}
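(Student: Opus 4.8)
The plan is to fix a vertex $v$ and prove, by induction on $k$, that $v$ lies on a directed cycle of length $k$ for every $k\in\{3,\ldots,n\}$; the inductive step will show that a $k$-cycle through $v$ with $k<n$ can always be lengthened to a $(k+1)$-cycle through $v$. Throughout I rely on two standing facts: by strong connectivity every vertex has at least one out-neighbor and at least one in-neighbor, and by semi-completeness any two distinct vertices are joined by an edge in at least one direction.

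For the base case $k=3$, I would split the remaining vertices by their relation to $v$: let $O$ be the set of out-neighbors of $v$ and $I$ the set of in-neighbors. Strong connectivity forces $O\neq\emptyset$ and $I\neq\emptyset$, and semi-completeness gives $O\cup I=\{1,\ldots,n\}\setminus\{v\}$. The key point is that there must be an edge $a\to b$ with $a\in O$, $b\in I$ and $a\neq b$: otherwise the out-neighbors of $O\cup\{v\}$ would all lie inside $O\cup\{v\}$, so no vertex of $I$ could be reached from $v$, contradicting strong connectivity. Such an edge yields the triangle $v\to a\to b\to v$. The one delicate point is the possibility of bidirectional edges at $v$, i.e. vertices in $O\cap I$, which must be taken into account when selecting $a\neq b$; since $n\geq 3$ there is always enough room to do so.

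For the inductive step, suppose $v$ lies on a $k$-cycle $C:\,v=v_1\to v_2\to\cdots\to v_k\to v_1$ with $k<n$, and put $H=\{1,\ldots,n\}\setminus\{v_1,\ldots,v_k\}\neq\emptyset$. If some $w\in H$ satisfies $v_i\to w$ and $w\to v_{i+1}$ for a consecutive pair on $C$, then replacing the edge $v_i\to v_{i+1}$ by the detour $v_i\to w\to v_{i+1}$ gives a $(k+1)$-cycle through $v$. If no such $w$ exists, let $X$ be the vertices of $H$ with no edge into $C$ (equivalently, dominated by every vertex of $C$) and $Y=H\setminus X$; walking around $C$ shows that non-insertability forces each $w\in Y$ to dominate all of $C$. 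Strong connectivity makes both sets nonempty, since a vertex of $X$ has no edge back to $C$ and hence can return to $C$ only through $Y$, and the same escape argument as in the base case then produces an edge $x\to y$ with $x\in X$, $y\in Y$. As $v_i\to x$ and $y\to v_j$ for all $i,j$, I discard a single cycle vertex $v_m$ and splice in $x,y$, forming the $(k+1)$-cycle $\cdots\to v_{m-1}\to x\to y\to v_{m+1}\to\cdots$; because $k\geq 3$ one can always choose $v_m\neq v$, so the new cycle still passes through $v$.

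I expect the main obstacle to be this second case of the inductive step: when no external vertex can be inserted directly into $C$, one must exploit the global domination structure of $H$ together with strong connectivity, and then ensure that the spliced $(k+1)$-cycle contains the prescribed vertex $v$ rather than the discarded one. The base case conceals a smaller subtlety in handling bidirectional edges incident with $v$. Beyond these points, everything reduces to the two facts recorded at the outset, namely that semi-completeness supplies an edge between any two vertices and that strong connectivity supplies the escape edges needed to close up the cycles.
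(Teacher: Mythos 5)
The paper does not prove this lemma at all: it is quoted from Bang--Jensen and Gutin (the semi-complete version of Moon's vertex-pancyclicity theorem), so what you have written is a reconstruction of the standard textbook proof rather than an alternative to anything in the paper. Your inductive step is correct and essentially complete: walking around $C$ and using semi-completeness does show that every non-insertable $w$ either has no arc into $C$ (and is then dominated by all of $C$) or strictly dominates all of $C$; strong connectivity forces $X\neq\emptyset\neq Y$ and supplies an arc $x\to y$ from $X$ to $Y$; and discarding some $v_m\neq v$ while splicing in $x,y$ yields the required $(k+1)$-cycle through $v$.

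The gap is in the base case. Your escape argument shows only that, in the absence of an arc $a\to b$ with $a\in O$, $b\in I$, $a\neq b$, the set $O\cup\{v\}$ is closed under out-arcs, so that the vertices of $I\setminus O$ are unreachable from $v$. This is a contradiction only when $I\setminus O\neq\emptyset$. In the remaining case $I\subseteq O$ (every in-neighbour of $v$ is also an out-neighbour), $O\cup\{v\}$ is all of $V$ and no contradiction is obtained, so the existence of the arc $a\to b$ is not established. Your remark that ``since $n\geq3$ there is always enough room'' to choose $a\neq b$ misdiagnoses the difficulty: the problem is not selecting distinct $a,b$ but the collapse of the reachability argument. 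The case can be closed, but it needs its own short argument: if no arc runs from $O$ to $I$ between distinct vertices and $I\subseteq O$, then every $b\in I$ has $v$ as its only in-neighbour, so $|I|=1$ by semi-completeness (two vertices of $I$ would need an arc between them, violating this), and then the nonempty set $O\setminus I$ has no arc to $v$ nor to the unique vertex of $I$, hence cannot reach $v$, contradicting strong connectivity. With that patch the base case, and therefore the whole proof, is correct.
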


\bigskip A tournament is a semi-complete digraph with exactly one directed
edge between any two vertices \cite{Moonbook}. Trivially, a semi-complete
digraph is a tournament with perhaps some additional directed edges. We note
in passing that, from Lemma \ref{Moon}, it follows that every strongly
connected semi-complete digraph contains a full directed cycle and, therefore,
contains a strongly connected tournament.

\bigskip

As a consequence of the previous lemma, we then get the following important fact.

\begin{theorem}
\label{lgraph}Let $G=G_{n,E}$, $n\geq4,$ be a strongly connected semi-complete
digraph. Then, there are at least two strongly connected induced subgraphs of
$G$ with $n-1$ vertices.
\end{theorem}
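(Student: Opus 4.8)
The plan is to use Lemma~\ref{Moon} to locate a long directed cycle and then argue that removing certain vertices leaves a strongly connected subgraph. First I would invoke Lemma~\ref{Moon} with $k=n$ to obtain a full directed (Hamiltonian) cycle in $G$; after relabeling via a permutation similarity (justified by Lemma~\ref{lsim} at the level of efficiency, or simply by renaming vertices at the graph level), I may assume this cycle is $1\rightarrow 2\rightarrow\cdots\rightarrow n\rightarrow 1$. The key observation is that a full cycle already guarantees strong connectivity, so the task reduces to finding two vertices whose deletion does not destroy all the connectivity provided by the remaining edges.

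The central idea is that deleting a single vertex $i$ breaks the Hamiltonian cycle at $i$, leaving a Hamiltonian \emph{path} on $G(i)$ that runs $i+1\rightarrow i+2\rightarrow\cdots\rightarrow n\rightarrow 1\rightarrow\cdots\rightarrow i-1$ (indices mod $n$). To restore strong connectivity in $G(i)$ it then suffices to have a single back edge from the end of this path, namely vertex $i-1$, to its start, vertex $i+1$. Because $G$ is semi-complete, between $i-1$ and $i+1$ there is at least one edge in some direction; the subgraph $G(i)$ is strongly connected precisely when this edge goes $i-1\rightarrow i+1$ (together with the surviving path edges, this closes a cycle through all $n-1$ vertices). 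So the question becomes: for how many indices $i$ is the edge between $i-1$ and $i+1$ oriented as $i+1\rightarrow i-1$ rather than $i-1\rightarrow i+1$?

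The plan is then a counting/pigeonhole argument over the $n$ pairs $\{i-1,i+1\}$ as $i$ ranges over the cycle. I would like to show that at most $n-2$ of these pairs can have the ``bad'' orientation $i+1\rightarrow i-1$, which would leave at least two indices $i$ with $G(i)$ strongly connected. The cleanest way I foresee is to consider the induced sub-tournament on the $n$ vertices given by the chords $\{i+1,i-1\}$: when $n$ is odd these chords form a single cycle on all $n$ vertices, and when $n$ is even they split into two cycles, but in either case they form a disjoint union of directed-or-undirected cycles in which not every edge can point the ``bad'' way, since a cycle of edges cannot be consistently oriented so that every one of its arcs is simultaneously ``backward'' relative to the Hamiltonian direction. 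Establishing this orientation bound rigorously is the step I expect to be the main obstacle.

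The hard part will be turning the intuitive ``a cycle cannot have all arcs pointing the wrong way'' into a clean argument that yields two good indices rather than merely one, and handling the semi-complete (as opposed to tournament) case where some pairs carry edges in both directions---though that case only helps, since a doubled edge between $i-1$ and $i+1$ makes $G(i)$ strongly connected outright. I would structure the final write-up by reducing to the tournament skeleton (using the remark that $G$ contains a strongly connected tournament), proving the orientation-counting bound on the chord structure, and then reinstating any extra edges of $G$, which can only increase connectivity. If the direct counting proves awkward, a fallback is an inductive argument on $n$: delete one well-chosen vertex to get a strongly connected semi-complete digraph on $n-1$ vertices and leverage the inductive hypothesis, but I expect the cyclic counting argument to be both shorter and more transparent.
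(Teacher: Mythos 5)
Your reduction to the orientation of the chords $\{i-1,i+1\}$ breaks down at the counting step, and the claim you yourself flag as the main obstacle is in fact false. First, ``$G(i)$ is strongly connected precisely when the edge goes $i-1\rightarrow i+1$'' holds only in the ``if'' direction: $G(i)$ can be strongly connected through longer detours even when that chord points the wrong way. More seriously, the proposed bound --- that at most $n-2$ of the $n$ chords can be oriented $i+1\rightarrow i-1$ --- fails. Take $n=5$ and the circulant tournament with $i\rightarrow i+1$ and $i+2\rightarrow i$ (indices mod $5$): it is semi-complete and strongly connected via the Hamiltonian cycle, yet every one of the five distance-two chords is ``bad''. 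The reverse orientation of a cycle is itself a perfectly consistent orientation (the same cycle traversed the other way), so there is no combinatorial obstruction of the kind you invoke. In this example every $G(i)$ happens to be strongly connected anyway (e.g.\ in $G(5)$ the path $4\rightarrow2\rightarrow3\rightarrow1$ substitutes for the missing chord $4\rightarrow1$), which shows that the theorem survives but your certificate for it does not: counting good chords can certify zero good indices even when all $n$ are good.

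The paper sidesteps this entirely by applying Lemma~\ref{Moon} with $k=n-1$ rather than $k=n$: there is a directed cycle of length $n-1$, and the vertex $i$ it omits gives a strongly connected $G(i)$, since that cycle passes through all $n-1$ remaining vertices; a second application of the lemma produces a cycle of length $n-1$ through $i$, which omits some $j\neq i$ and certifies $G(j)$ the same way. If you wanted to salvage your own line, you would have to replace the single-chord criterion by one allowing arbitrary return paths from $i-1$ to $i+1$ inside $G(i)$, at which point you would essentially be re-proving the $k=n-1$ case of Lemma~\ref{Moon}. Your fallback inductive idea is also not developed enough to evaluate, and is not what the paper does.
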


\begin{proof}
By Lemma \ref{Moon} there is a cycle $\mathcal{C}$ in $G$ of length $n-1$. Let
$i$ be the vertex not contained in $\mathcal{C}.$ Again, by the lemma, there
is a cycle $\mathcal{C}^{\prime}$ in $G$ of length $n-1$ containing vertex
$i.$ If $j$ is the vertex not contained in $C^{\prime},$ we have that $G(i)$
and $G(j)$ are strongly connected, as both contain full cycles.
\end{proof}

\bigskip

It appears common that a strongly connected semi-complete digraph with $n$
vertices has more than two strongly connected induced subgraphs with $n-1$
vertices, but there may be only two, as illustrated next.

\begin{example}
\label{ex1}Let%
\[
R=\left[
\begin{array}
[c]{cccccc}%
0 & 1 & 1 & 0 & 1 & 1\\
0 & 0 & 1 & 0 & 1 & 0\\
0 & 0 & 0 & 1 & 1 & 1\\
1 & 1 & 0 & 0 & 1 & 1\\
0 & 0 & 0 & 0 & 0 & 1\\
0 & 1 & 0 & 0 & 0 & 0
\end{array}
\right]  .
\]
The matrix $R$ is the adjacency matrix of a semi-complete strongly connected
digraph $G$. We have $R(i)$ irreducible if and only if $i=1$ or $i=5,$ that
is, the only induced subgraphs of $G$ with $5$ vertices that are strongly
connected are $G(1)$ and $G(5).$
\end{example}

\section{Recursive construction of all the efficient vectors for a reciprocal
matrix\label{s44}}

Here and throughout, we use the following notation, which plays an important
role in the presentation of our results. For $A\in\mathcal{PC}_{n}$ and
$i\in\{1,\ldots,n\},$ we denote by $\mathcal{E}(A;i)$ the set of vectors $w$
efficient for $A$ and such that $w(i)$ is efficient for $A(i),$ that is,
$w\in\mathcal{E}(A)$ and $w(i)\in\mathcal{E}(A(i)).$

\bigskip

From Theorem \ref{blanq} and Theorem \ref{lgraph}, we obtain the following
result, which allows to construct all efficient vectors $w$ for a given matrix
$A\in\mathcal{PC}_{n}.$ Note that $G(A,w)(i)=G(A(i),w(i))$ for $i\in
\{1,\ldots,n\}.$

\begin{theorem}
\label{tnbyn}Let $A\in\mathcal{PC}_{n}$, $n\geq4,$ and $w\in\mathcal{E}(A).$
Then, there are $i,j\in\{1,\ldots,n\},$ with $i\neq j,$ such that
$w(i)\in\mathcal{E}(A(i))$ and $w(j)\in\mathcal{E}(A(j)),$ that is,
$w\in\mathcal{E}(A;i)\cap\mathcal{E}(A;j).$
\end{theorem}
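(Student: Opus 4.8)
The plan is to translate the statement about efficient vectors into a statement about the digraph $G(A,w)$ and then invoke the graph-theoretic result already in hand. By Theorem \ref{blanq}, the hypothesis $w\in\mathcal{E}(A)$ is equivalent to $G(A,w)$ being strongly connected. Moreover, $G(A,w)$ is semi-complete, as noted in the excerpt. Thus $G(A,w)$ is exactly a strongly connected semi-complete digraph on $n\geq 4$ vertices, which is precisely the setting of Theorem \ref{lgraph}.

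Next, I would apply Theorem \ref{lgraph} to $G:=G(A,w)$ to obtain two distinct vertices $i,j\in\{1,\ldots,n\}$ for which the induced subgraphs $G(i)$ and $G(j)$ are both strongly connected. The crucial bookkeeping step is the identity $G(A,w)(i)=G(A(i),w(i))$, which is flagged just before the theorem: deleting vertex $i$ from the comparison digraph is the same as forming the comparison digraph of the principal submatrix $A(i)$ against the subvector $w(i)$. This holds because the edge relation $\frac{w_p}{w_q}\geq a_{pq}$ for $p,q\neq i$ depends only on the retained entries. Consequently, $G(A,w)(i)=G(A(i),w(i))$ being strongly connected is, again by Theorem \ref{blanq} applied in dimension $n-1$, equivalent to $w(i)\in\mathcal{E}(A(i))$; similarly for $j$.

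Combining these two equivalences, the strong connectivity of $G(i)$ and $G(j)$ yields $w(i)\in\mathcal{E}(A(i))$ and $w(j)\in\mathcal{E}(A(j))$, which is exactly the conclusion $w\in\mathcal{E}(A;i)\cap\mathcal{E}(A;j)$ by the definition of $\mathcal{E}(A;\cdot)$ given at the start of the section. This completes the argument.

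I do not expect a genuine obstacle here, since the theorem is essentially a dictionary translation of Theorem \ref{lgraph} through the characterization in Theorem \ref{blanq}. The only point requiring mild care is verifying the commuting identity $G(A,w)(i)=G(A(i),w(i))$ cleanly, i.e.\ that forming an induced subgraph and forming a principal submatrix are compatible operations; but this is immediate from the definitions and is already asserted in the excerpt, so it can be invoked rather than re-derived.
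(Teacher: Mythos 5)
Your proposal is correct and follows exactly the route the paper takes: the paper derives this theorem directly from Theorem \ref{blanq} and Theorem \ref{lgraph} together with the observation $G(A,w)(i)=G(A(i),w(i))$, which is precisely your argument. No gaps.
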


As noted before, if $A\in\mathcal{PC}_{n},$ $G=G(A,w)$ is a semi-complete
digraph. The converse also holds. If $G$ is a semi-complete digraph with $n$
vertices, there is a matrix $A=[a_{ij}]\in\mathcal{PC}_{n}$ and a vector
$w\in\mathcal{V}_{n}$ such that $G=G(A,w).$ Just take $w=\mathbf{e}_{n},$ the
n-vector with all entries $1$, and $A$ such that $a_{ij}<1$ if $i\rightarrow
j$ is an edge in $G$ and $j\rightarrow i$ is not an edge in $G,$ and
$a_{ij}=1$ if both $i\rightarrow j$ and $j\rightarrow i$ are edges in $G$.
Thus, based on Example \ref{ex1}, we give a matrix $A\in\mathcal{PC}_{6}$ and
a vector $w\in\mathcal{E}(A)$ such that $w(i)\in\mathcal{E}(A(i))$ for exactly
two distinct $i\in\{1,\ldots,n\}.$

\begin{example}
Let $w=\mathbf{e}_{6}$ and
\[
A=\left[
\begin{array}
[c]{cccccc}%
1 & 0.5 & 0.5 & 2 & 0.5 & 0.5\\
2 & 1 & 0.5 & 2 & 0.5 & 2\\
2 & 2 & 1 & 0.5 & 0.5 & 0.5\\
0.5 & 0.5 & 2 & 1 & 0.5 & 0.5\\
2 & 2 & 2 & 2 & 1 & 0.5\\
2 & 0.5 & 2 & 2 & 2 & 1
\end{array}
\right]  .
\]
The adjacency matrix of $G(A,w)$ is the matrix $R$ in Example \ref{ex1}. Thus,
$w\in\mathcal{E}(A),$ and $w(i)\in\mathcal{E}(A(i))$ if and only if
$i\in\{1,5\}.$
\end{example}

We next give an example of a family of matrices $A\in\mathcal{PC}_{5}$ and a
vector $w\in\mathcal{E}(A)$ such that $w(i)\in\mathcal{E}(A(i))$ for every
$i\in\{1,\ldots,5\}.$

\begin{example}
Let $x_{i}>0,$ $i=1,2,3.$ Let $w=\mathbf{e}_{5}$ and
\[
A=\left[
\begin{array}
[c]{ccccc}%
1 & 1 & 1 & x_{1} & x_{2}\\
1 & 1 & 1 & 1 & x_{3}\\
1 & 1 & 1 & 1 & 1\\
\frac{1}{x_{1}} & 1 & 1 & 1 & 1\\
\frac{1}{x_{2}} & \frac{1}{x_{3}} & 1 & 1 & 1
\end{array}
\right]  \in\mathcal{PC}_{5}.
\]
Then, $w\in\mathcal{E}(A)$ and $w(i)\in\mathcal{E}(A(i))$ for any
$i\in\{1,\ldots,5\}.$ The same happens, if instead of $A$ and $w,$ we take
$DAD^{-1}$ and $Dw,$ for any $5$-by-$5$ positive diagonal matrix $D.$
\end{example}

From Theorem \ref{tnbyn}, we obtain the following.

\begin{corollary}
\label{cunion}Let $A\in\mathcal{PC}_{n}$, $n\geq4.$ Then,
\[
\mathcal{E}(A)=%
{\displaystyle\bigcup\limits_{i=1}^{n}}
\mathcal{E}(A;i)=%
{\displaystyle\bigcup\limits_{i=1,\text{ }i\neq p}^{n}}
\mathcal{E}(A;i),
\]
for any $p\in\{1,\ldots,n\}.$
\end{corollary}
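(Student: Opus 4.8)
The plan is to derive Corollary \ref{cunion} directly from Theorem \ref{tnbyn}, which is the substantive statement; the corollary is essentially a repackaging of that theorem's conclusion into a set-equality over the full index range. First I would establish the two easy inclusions that hold for every index $i$. By definition, $\mathcal{E}(A;i)=\{w:w\in\mathcal{E}(A)\text{ and }w(i)\in\mathcal{E}(A(i))\}$, so trivially $\mathcal{E}(A;i)\subseteq\mathcal{E}(A)$ for each $i$. Taking the union over any subset of indices preserves this, so both $\bigcup_{i=1}^{n}\mathcal{E}(A;i)\subseteq\mathcal{E}(A)$ and $\bigcup_{i=1,\,i\neq p}^{n}\mathcal{E}(A;i)\subseteq\mathcal{E}(A)$ are immediate.

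The reverse inclusion is where Theorem \ref{tnbyn} does the work. Let $w\in\mathcal{E}(A)$ be arbitrary. Theorem \ref{tnbyn} guarantees two \emph{distinct} indices $i,j$ with $w\in\mathcal{E}(A;i)\cap\mathcal{E}(A;j)$. For the full union $\bigcup_{i=1}^{n}\mathcal{E}(A;i)$, it suffices that at least one such index exists, so $w$ lies in the union and $\mathcal{E}(A)\subseteq\bigcup_{i=1}^{n}\mathcal{E}(A;i)$. The point of the second equality is the stronger statement that one may drop \emph{any} single prescribed index $p$ and still capture all of $\mathcal{E}(A)$: given $p$, the two indices $i,j$ produced by the theorem cannot both equal $p$ since they are distinct, so at least one of them lies in $\{1,\ldots,n\}\setminus\{p\}$. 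Hence $w\in\mathcal{E}(A;i)$ for some $i\neq p$, giving $\mathcal{E}(A)\subseteq\bigcup_{i=1,\,i\neq p}^{n}\mathcal{E}(A;i)$.

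Combining the inclusions in both directions yields the chain of equalities $\mathcal{E}(A)=\bigcup_{i=1}^{n}\mathcal{E}(A;i)=\bigcup_{i=1,\,i\neq p}^{n}\mathcal{E}(A;i)$ for every $p$, completing the argument. I do not anticipate a genuine obstacle here: the corollary is a formal consequence, and the only subtlety worth flagging explicitly is the role of the \emph{distinctness} of $i$ and $j$ in Theorem \ref{tnbyn} — this is precisely what licenses the omission of an arbitrary index $p$, and it is the reason the theorem was stated with two distinct indices rather than merely one. Everything else is set-theoretic bookkeeping.
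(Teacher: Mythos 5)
Your proof is correct and is exactly the argument the paper intends: the corollary is stated as an immediate consequence of Theorem \ref{tnbyn} (with no written proof), and your use of the distinctness of the two indices $i,j$ to justify omitting an arbitrary $p$ is precisely the point. Nothing to add.
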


Applying Theorem \ref{tnbyn} recursively, we get the following.

\begin{corollary}
Let $A\in\mathcal{PC}_{n}$, $n\geq4.$ If $w\in\mathcal{E}(A),$ then, for any
$p\in\{3,\ldots,n-1\},$ there are at least two principal submatrices of $A$,
$A[K_{1}],$ $A[K_{2}]\in\mathcal{PC}_{p}$ (indexed by two distinct subsets
$K_{1},K_{2}\subset\{1,\ldots,n\})$, such that $w[K_{1}]\in\mathcal{E}%
(A[K_{1}])$ and $w[K_{2}]\in\mathcal{E}(A[K_{2}]).$
\end{corollary}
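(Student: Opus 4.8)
The plan is to prove this by induction on $p$, descending from $p=n-1$ to $p=3$, using Theorem \ref{tnbyn} as the engine at each step. The base case $p=n-1$ is essentially Theorem \ref{tnbyn} itself: given $w\in\mathcal{E}(A)$ with $A\in\mathcal{PC}_n$ and $n\geq 4$, that theorem supplies indices $i\neq j$ with $w(i)\in\mathcal{E}(A(i))$ and $w(j)\in\mathcal{E}(A(j))$. Setting $K_1=\{1,\ldots,n\}\setminus\{i\}$ and $K_2=\{1,\ldots,n\}\setminus\{j\}$ gives two distinct $(n-1)$-subsets with $w[K_\ell]\in\mathcal{E}(A[K_\ell])$, since $A[K]=A(K^c)$ and $w[K]=w(K^c)$ under complementation within $\{1,\ldots,n\}$.

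For the inductive step I would argue as follows. Suppose the statement holds for some $p$ with $4\leq p\leq n-1$, so that there exist distinct subsets $K_1,K_2$ of size $p$ with $w[K_\ell]\in\mathcal{E}(A[K_\ell])$. I want to descend to size $p-1$. Pick any one of these, say $K_1$, and apply Theorem \ref{tnbyn} to the reciprocal matrix $B:=A[K_1]\in\mathcal{PC}_p$ (legitimate because $p\geq 4$) together with the efficient vector $u:=w[K_1]\in\mathcal{E}(B)$. The theorem produces two distinct indices within $K_1$ whose deletion leaves a $(p-1)$-dimensional efficient subvector; equivalently, two distinct $(p-1)$-subsets $K_1',K_1''\subset K_1$ with $w[K_1'],w[K_1'']\in\mathcal{E}(A[K_1'])$ and $\mathcal{E}(A[K_1''])$ respectively. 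This already yields two distinct $(p-1)$-subsets, completing the step. The only care needed is to confirm that deletion inside $B$ corresponds to restriction inside $A$, i.e. that $(A[K_1])[K'] = A[K']$ and $(w[K_1])[K'] = w[K']$ for $K'\subset K_1$; these are immediate from the definitions of principal submatrix and subvector.

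The main subtlety — and the step I expect to require the most care — is ensuring the \emph{distinctness} of the two produced subsets at every level, particularly in the transition from $p=4$ down to $p=3$, where $p-1=3$ falls below the hypothesis $n\geq 4$ of Theorem \ref{tnbyn}. This last descent still works because I apply Theorem \ref{tnbyn} to a matrix in $\mathcal{PC}_4$ (which satisfies $n\geq 4$), and the theorem's \emph{conclusion} places no lower bound on the resulting submatrices; it simply delivers two distinct size-$3$ efficient restrictions. Thus the target $p=3$ is reached by applying the theorem to a $4$-by-$4$ matrix, never needing to invoke it in dimension $3$ itself. In fact I observe that distinctness is automatic in the descent, since applying Theorem \ref{tnbyn} to a single $p$-dimensional submatrix already returns two distinct $(p-1)$-subsets; there is no need to track or combine information from both $K_1$ and $K_2$.

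The cleanest write-up, then, is a short downward induction: at each level $p\in\{4,\ldots,n\}$ I apply Theorem \ref{tnbyn} to one efficient $p$-dimensional restriction to obtain two distinct efficient $(p-1)$-dimensional restrictions, using the compatibility of nested principal submatrices with nested subvectors, and noting that the hypothesis $n\geq 4$ of Theorem \ref{tnbyn} is met precisely because the ambient dimension at each application is at least $4$.
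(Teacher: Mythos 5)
Your proposal is correct and is essentially the argument the paper intends: the paper gives no written proof, stating only that the corollary follows by "applying Theorem \ref{tnbyn} recursively," which is exactly your downward induction (apply the theorem to one efficient $p$-dimensional restriction with $p\geq 4$ to get two distinct efficient $(p-1)$-dimensional restrictions). Your added care about the compatibility of nested restrictions and about the final descent from $p=4$ to $p=3$ correctly fills in the details the paper leaves implicit.
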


\bigskip

If $A\in\mathcal{PC}_{3},$ the efficient vectors for $A$ are described in
Corollary \ref{c3por3}. If $A\in\mathcal{PC}_{n}$, $\ $with $n\geq4,$ based on
Corollary \ref{cunion}, we obtain an algorithm to construct all efficient
vectors for $A$. In fact, by applying recursively Corollary \ref{cunion}, we
have that any efficient vector for $A$ has an $(n-1)$-subvector efficient for
the corresponding principal submatrix of $A$ in $\mathcal{PC}_{n-1},$ which in
turn has a subvector efficient for the corresponding principal submatrix of
$A$ in $\mathcal{PC}_{n-2},$ etc... So, starting with $p=3,$ for each
$p$-by-$p$ principal submatrix $B$ of $A,$ we extend all its efficient vectors
to efficient vectors for the $(p+1)$-by-$(p+1)$ principal submatrices of $A$
that have $B$ as a principal submatrix. Applying this procedure successively,
we obtain $\mathcal{E}(A).$ Next we formalize this construction.

\bigskip

For $k=1,\ldots,n,$ denote by $\mathcal{C}_{A,k}$ the collection of all
$k$-by-$k$ principal submatrices of $A.$ Note that there are $C_{k}^{n}$
matrices in $\mathcal{C}_{A,k}$ (of course some may be equal) and
$\mathcal{C}_{A,n}=\{A\}.$

\bigskip

\textbf{Algorithm}

\bigskip

Let $A\in\mathcal{PC}_{n}$ and $p=3$

for any $B\in\mathcal{C}_{A,p},$

$\qquad$use Corollary \ref{c3por3} to obtain all the efficient vectors for
$B,$

$\qquad$giving the set of vectors $\mathcal{E}(B)$

endfor

for $k=p+1$ to $n$

$\qquad$for any $B\in\mathcal{C}_{A,k}$

$\qquad\qquad$let $\mathcal{E}(B)=\emptyset$

$\qquad\qquad$for any $i\in\{1,\ldots,k\}$ (except eventually one)

$\qquad\qquad\qquad$extend all the efficient vectors for $B(i)$ (in
$\mathcal{E}(B(i))$)

$\qquad\qquad\qquad$to efficient vectors for $B$ using Theorem \ref{thext},

$\qquad\qquad\qquad$giving the set of vectors $\mathcal{E}(B;i)$

$\qquad\qquad\qquad$let $\mathcal{E}(B)=\mathcal{E}(B)\cup\mathcal{E}(B;i)$

$\qquad\qquad$endfor

$\qquad$endfor

endfor

\bigskip

For $k=n,$ we get $\mathcal{E}(A),$ the set of all efficient vectors for $A.$

\bigskip

We note that, if all efficient vectors for the matrices in $\mathcal{C}_{A,p}$
are known for some $p>3,$ we may start our algorithm with this value of $p,$
skipping the first step in the algorithm concerning the determination of
$\mathcal{E}(B)$ for $B\in\mathcal{C}_{A,3}.$

We also observe that, when $B\in\mathcal{C}_{A,k},$ $3\leq k\leq n,$ is a
simple perturbed consistent matrix, Theorem \ref{tmain} is helpful to
construct directly the efficient vectors for $B$.

\section{$\mathcal{E}(A)$ is connected\label{s5}}

For $A\in\mathcal{PC}_{n},$ the precise "geometric" nature of $\mathcal{E}(A)$
is unknown. If $A\in\mathcal{PC}_{2}$ then $A$ is consistent and, thus,
$\mathcal{E}(A)$ consists of positive multiples of a single vector. If $n=3,$
$\mathcal{\ }A$ is a simple perturbed consistent matrix and, by Corollary
\ref{c3por3}, the efficient vectors for $A\ $are defined by a finite system of
linear inequalities in the vector entries. Thus, $\mathcal{E}(A)$ is convex
for $n\leq3,$ but it is not in general. Since any column of $A$ is efficient
for $A$ \cite{FJ1} and the Perron vector of $A$ is a positive linear
combination of the columns of $A,$ when $\mathcal{E}(A)$ is convex, the Perron
vector is efficient for $A$. For each $n\geq4,$ examples of matrices
$A\in\mathcal{PC}_{n}$ with inefficient Perron vector are known
\cite{blanq2006, bozoki2014}, in which cases $\mathcal{E}(A)$ is not convex.

In \cite{blanq2006}, an indirect existential proof of connectivity of
$\mathcal{E}(A)$ is stated, based upon a general result from optimization
\cite{carrizoza1995}. This proof is not enlightening about the explicit nature
of connectivity. We have found connectivity to be a helpful fact, and, here,
we give a constructive proof of piecewise linear (PWL) connectivity of
$\mathcal{E}(A)$ and of each subset $\mathcal{E}(A;i),$ $i=1,\ldots,n$, the
latter being new. Our approach is based on the algebraic structure we have
developed. Our overall method is inductive on $n.$

\bigskip

Let $x,y\in S\subseteq\mathbb{R}_{+}^{k}.$ We say that $x$ and $y$ are
\emph{PWL connected} in $S$ if, for some $u_{0},\ldots,u_{s}\in S,$ with
$u_{0}=x$ and $u_{s}=y,$ we have $tu_{i}+(1-t)u_{i+1}\in S$ for any
$i=0,\ldots,s-1$ and any $t\in(0,1)$. Clearly, if, for any $x,y\in S,$ $x,y$
is PWL connected then $S$ is connected. In this case, we say that $S\ $is PWL
connected. Note that any convex set is PWL connected.

We say that the vectors $\left[
\begin{array}
[c]{ccc}%
w_{1} & \cdots & w_{n-1}%
\end{array}
\right]  ^{T},\left[
\begin{array}
[c]{ccc}%
v_{1} & \cdots & v_{n-1}%
\end{array}
\right]  ^{T}\in\mathcal{V}_{n-1}$ have the same \emph{maximal} (resp.
\emph{minimal}) $n$\emph{-index} for $A\in\mathcal{PC}_{n}$ if there exists
$p\in\{1,\ldots,n-1\}$ such that
\[
\max_{1\leq i\leq n-1}\frac{w_{i}}{a_{in}}=\frac{w_{p}}{a_{pn}}\text{ and
}\max_{1\leq i\leq n-1}\frac{v_{i}}{a_{in}}=\frac{v_{p}}{a_{pn}}%
\]
(resp. $\min_{1\leq i\leq n-1}\frac{w_{i}}{a_{in}}=\frac{w_{p}}{a_{pn}}$ and
$\min_{1\leq i\leq n-1}\frac{v_{i}}{a_{in}}=\frac{v_{p}}{a_{pn}}$)$.$

\bigskip

Observe that $\max_{1\leq i\leq n-1}\frac{w_{i}}{a_{in}}$ and $\min_{1\leq
i\leq n-1}\frac{w_{i}}{a_{in}}\ $depend continuously on $w_{1},\ldots
,w_{n-1}.$

\begin{lemma}
\label{ll1}Let $A\in\mathcal{PC}_{n}$ and $x,y\in\mathcal{E}(A(n)).$ Let
$t\in(0,1)$ and suppose that $tx+(1-t)y\in\mathcal{E}(A(n)).$ Then, the
following statements are equivalent:

\begin{enumerate}
\item $tw+(1-t)v\in\mathcal{E}(A),$ for all $w,v\in\mathcal{E}(A)$ with
$w(n)=x$ and $v(n)=y;$

\item $x$ and $y$ have the same maximal and the same minimal $n$-indices for
$A.$
\end{enumerate}

\begin{proof}
Suppose that $1.$ holds and $w(n)$ and $v(n)$ do not have the same maximal
$n$-index. Suppose that $\max_{1\leq i\leq n-1}\frac{w_{i}}{a_{in}}%
=\frac{w_{p}}{a_{pn}}$ and $\max_{1\leq i\leq n-1}\frac{v_{i}}{a_{in}}%
=\frac{v_{q}}{a_{qn}}\neq\frac{v_{p}}{a_{pn}}.$ Let $w_{n}=\frac{w_{p}}%
{a_{pn}}$ and $v_{n}=\frac{v_{q}}{a_{qn}}.$ By Theorem \ref{thext},
$w,v\in\mathcal{E}(A).$ We will show that $tw+(1-t)v\notin\mathcal{E}(A),$
contradicting the hypothesis. Since $tw(n)+(1-t)v(n)\in\mathcal{E}(A(n)),$ by
Theorem \ref{thext}, it is enough to note that
\begin{align*}
tw_{n}+(1-t)v_{n}  &  =t\frac{w_{p}}{a_{pn}}+(1-t)\frac{v_{q}}{a_{qn}}\\
&  =\max_{1\leq i\leq n-1}\frac{tw_{i}}{a_{in}}+\max_{1\leq i\leq n-1}%
\frac{(1-t)v_{i}}{a_{in}}\\
&  >\max_{1\leq i\leq n-1}\frac{tw_{i}+(1-t)v_{i}}{a_{in}}.
\end{align*}
Similar arguments can be applied if $w(n)$ and $v(n)$ do not have the same
minimal $n$-index.

Suppose that $2.$ holds and let $w,v\in\mathcal{E}(A)$ with $w(n)=x$ and
$v(n)=y.$ Let
\[
\max_{1\leq i\leq n-1}\frac{w_{i}}{a_{in}}=\frac{w_{p}}{a_{pn}}\text{ and
}\max_{1\leq i\leq n-1}\frac{v_{i}}{a_{in}}=\frac{v_{p}}{a_{pn}},
\]
and%
\[
\min_{1\leq i\leq n-1}\frac{w_{i}}{a_{in}}=\frac{w_{q}}{a_{qn}}\text{ and
}\min_{1\leq i\leq n-1}\frac{v_{i}}{a_{in}}=\frac{v_{q}}{a_{qn}}.
\]
Then,%
\[
\max_{1\leq i\leq n-1}\frac{tw_{i}+(1-t)v_{i}}{a_{in}}=\frac{tw_{p}%
+(1-t)v_{p}}{a_{pn}}=t\max_{1\leq i\leq n-1}\frac{w_{i}}{a_{in}}%
+(1-t)\max_{1\leq i\leq n-1}\frac{v_{i}}{a_{in}}%
\]
and%
\[
\min_{1\leq i\leq n-1}\frac{tw_{i}+(1-t)v_{i}}{a_{in}}=\frac{tw_{q}%
+(1-t)v_{q}}{a_{qn}}=t\min_{1\leq i\leq n-1}\frac{w_{i}}{a_{in}}%
+(1-t)\min_{1\leq i\leq n-1}\frac{v_{i}}{a_{in}}.
\]
Since $w,v\in\mathcal{E}(A;n),$ by Theorem \ref{thext},
\[
\min_{1\leq i\leq n-1}\frac{w_{i}}{a_{in}}\leq w_{n}\leq\max_{1\leq i\leq
n-1}\frac{w_{i}}{a_{in}}\text{ and }\min_{1\leq i\leq n-1}\frac{v_{i}}{a_{in}%
}\leq v_{n}\leq\max_{1\leq i\leq n-1}\frac{v_{i}}{a_{in}}.
\]
Thus,%
\[
\min_{1\leq i\leq n-1}\frac{tw_{i}+(1-t)v_{i}}{a_{in}}\leq tw_{n}%
+(1-t)v_{n}\leq\max_{1\leq i\leq n-1}\frac{tw_{i}+(1-t)v_{i}}{a_{in}}.
\]
By Theorem \ref{thext}, $tw+(1-t)v\in\mathcal{E}(A;n)$.
\end{proof}
\end{lemma}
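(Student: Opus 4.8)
The plan is to reduce the entire equivalence to the elementary convexity/concavity of the two functions
\[
M(u)=\max_{1\leq i\leq n-1}\frac{u_{i}}{a_{in}},\qquad m(u)=\min_{1\leq i\leq n-1}\frac{u_{i}}{a_{in}},
\]
defined for positive $(n-1)$-vectors $u$, and then to read off statement $2$ as the equality case. First I would record the reformulation supplied by Theorem \ref{thext} applied with $k=n$: since $x\in\mathcal{E}(A(n))$, a vector $w$ with $w(n)=x$ lies in $\mathcal{E}(A)$ precisely when its last coordinate satisfies $w_{n}\in[m(x),M(x)]$, and likewise $v$ with $v(n)=y$ lies in $\mathcal{E}(A)$ precisely when $v_{n}\in[m(y),M(y)]$. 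Because $(tw+(1-t)v)(n)=tx+(1-t)y$, which is efficient for $A(n)$ by hypothesis, the same theorem gives that $tw+(1-t)v\in\mathcal{E}(A)$ if and only if $tw_{n}+(1-t)v_{n}\in[m(tx+(1-t)y),M(tx+(1-t)y)]$.

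Next I would observe that, as $w_{n}$ ranges over $[m(x),M(x)]$ and $v_{n}$ independently ranges over $[m(y),M(y)]$, the quantity $tw_{n}+(1-t)v_{n}$ sweeps out exactly the interval $[tm(x)+(1-t)m(y),\,tM(x)+(1-t)M(y)]$. Hence statement $1$ is equivalent to the containment
\[
\bigl[tm(x)+(1-t)m(y),\ tM(x)+(1-t)M(y)\bigr]\subseteq\bigl[m(tx+(1-t)y),\ M(tx+(1-t)y)\bigr].
\]
Since $M$ is a pointwise maximum of the linear functionals $u\mapsto u_{i}/a_{in}$ it is convex, and $m$, a pointwise minimum of the same functionals, is concave; thus $M(tx+(1-t)y)\leq tM(x)+(1-t)M(y)$ and $m(tx+(1-t)y)\geq tm(x)+(1-t)m(y)$ hold automatically. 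These two inequalities say that the \emph{reverse} containment always holds, so the displayed containment can hold only when both inequalities are in fact equalities.

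It then remains to identify the equality cases. For the maximum I would argue that $M(tx+(1-t)y)=tM(x)+(1-t)M(y)$ holds if and only if some single index $p$ attains the maximum for both $x$ and $y$: if such a common $p$ exists, evaluating the $p$-th functional realizes the upper bound and forces equality; conversely, taking $p$ to be a maximizer of $tx+(1-t)y$ and using $x_{p}/a_{pn}\leq M(x)$ together with $y_{p}/a_{pn}\leq M(y)$ forces both to be equalities, so $p$ is a common maximizer. This is exactly the condition that $x$ and $y$ have the same maximal $n$-index, and the symmetric argument for $m$ gives the same minimal $n$-index. Combining, statement $1$ holds if and only if $x$ and $y$ share both their maximal and their minimal $n$-indices, which is statement $2$.

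The main obstacle is the correct bookkeeping of the two containments: the automatic convexity inequalities point the opposite way to what statement $1$ demands, so the crux is recognizing that statement $1$ forces these inequalities to be \emph{tight}, and then characterizing tightness as a shared argmax (respectively argmin). Everything else---the range computation for $tw_{n}+(1-t)v_{n}$ and the reformulation via Theorem \ref{thext}---is routine once the positivity of $A$ guarantees that the functionals $u\mapsto u_{i}/a_{in}$ are well defined and that $m(u)\leq M(u)$ always.
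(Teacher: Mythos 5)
Your argument is correct and rests on the same ingredients as the paper's proof: the reduction via Theorem \ref{thext} of efficiency of an extension to membership of the last coordinate in the interval $[m(\cdot),M(\cdot)]$, the convexity of $M$ and concavity of $m$, and the identification of the equality case with a shared maximal (resp.\ minimal) $n$-index. The only difference is organizational --- you package both implications into a single interval-containment statement whose endpoints are forced to coincide, whereas the paper argues the two directions separately (exhibiting explicit witnesses $w_{n}=M(x)$, $v_{n}=M(y)$ for the contrapositive of $1.\Rightarrow 2.$) --- so this is essentially the paper's proof in a cleaner wrapper.
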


The following lemma will be helpful in the proof of Lemma \ref{l55}.

\begin{lemma}
\label{lcont}Let $u_{i}(t),$ $\ i=1,\ldots,k,$ be linear functions of
$\ t\in\lbrack0,1].$ Let \ $m_{\max}(t)=\max\{u_{1}(t),\ldots,u_{k}(t)\}$ and
$m_{\min}(t)=\min\{u_{1}(t),\ldots,u_{k}(t)\}).$ Then, $m_{\max}(t)$ and
$m_{\min}(t)$ are piecewise linear functions of $t$ and there is\ a finite
partition of $[0,1],$ say
\begin{equation}
\ 0=t_{0}<t_{1}<\cdots<t_{s}=1, \label{part}%
\end{equation}
and indices \ $p_{0},\ldots,p_{s-1},q_{0},\ldots,q_{s-1}\in\{1,\ldots,k\}$
such that
\[
m_{\max}(t)=u_{p_{i}}(t)\text{ and }m_{\min}(t)=u_{q_{i}}(t)
\]
for all $t\in\lbrack t_{i},t_{i+1}]$ and all \ $i=0,\ldots,s-1$. In
particular, if $s>1$, then $u_{p_{i}}(t_{i+1})=u_{p_{i+1}}(t_{i+1})$ and
$u_{q_{i}}(t_{i+1})=u_{q_{i+1}}(t_{i+1}).$
\end{lemma}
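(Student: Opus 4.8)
The plan is to prove that the max and min of finitely many linear functions of $t$ are piecewise linear, via the standard observation that the pointwise maximum (resp. minimum) of affine functions is convex (resp. concave) and piecewise affine. Since each $u_i(t)=\alpha_i t+\beta_i$ is affine on $[0,1]$, the graph of $m_{\max}(t)$ is the upper envelope of finitely many line segments, and the graph of $m_{\min}(t)$ is the lower envelope. I would first note that $m_{\max}$ and $m_{\min}$ are continuous, being maxima/minima of finitely many continuous functions.

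First I would construct the partition \eqref{part}. The key step is to produce the breakpoints. For each pair of distinct indices $a,b\in\{1,\ldots,k\}$ with $\alpha_a\neq\alpha_b$, the equation $u_a(t)=u_b(t)$ has a unique solution $t_{a,b}$; collect all such crossing points that lie in $(0,1)$, together with $0$ and $1$, and relabel them in increasing order as $0=t_0<t_1<\cdots<t_s=1$. On each open subinterval $(t_i,t_{i+1})$ no two of the lines cross, so the ordering of the values $u_1(t),\ldots,u_k(t)$ is constant there; hence some fixed index $p_i$ realizes the maximum and some fixed index $q_i$ realizes the minimum throughout $(t_i,t_{i+1})$, and by continuity the same holds on the closed interval $[t_i,t_{i+1}]$. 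This gives $m_{\max}(t)=u_{p_i}(t)$ and $m_{\min}(t)=u_{q_i}(t)$ on $[t_i,t_{i+1}]$, establishing piecewise linearity.

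For the final ``matching'' conclusion when $s>1$, I would evaluate at an interior breakpoint $t_{i+1}$. Since $m_{\max}=u_{p_i}$ on $[t_i,t_{i+1}]$ and $m_{\max}=u_{p_{i+1}}$ on $[t_{i+1},t_{i+2}]$, continuity of $m_{\max}$ at $t_{i+1}$ forces $u_{p_i}(t_{i+1})=m_{\max}(t_{i+1})=u_{p_{i+1}}(t_{i+1})$; the identical argument applied to $m_{\min}$ gives $u_{q_i}(t_{i+1})=u_{q_{i+1}}(t_{i+1})$.

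I do not expect a serious obstacle here; this is essentially a clean envelope-of-lines argument. The only point requiring mild care is handling degenerate or parallel lines: if $\alpha_a=\alpha_b$ the two lines never cross (or coincide), so they contribute no breakpoint, and the selection of $p_i,q_i$ may not be unique when several lines agree on a subinterval. This is harmless, since I only need \emph{some} index realizing the extremum, and I may break ties by any fixed rule. The mild bookkeeping of merging coincident crossing points and verifying that the constant-ordering claim survives at the endpoints by continuity is the most delicate part, but it is routine.
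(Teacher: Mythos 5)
Your argument is correct and is essentially the same as the paper's: both collect the finitely many pairwise intersection points of the lines to form the partition, observe that the ordering of the $u_i$ is constant on each resulting open subinterval (extended to the closed subinterval by continuity), and deduce the matching condition at breakpoints from continuity of the envelopes. Your write-up merely spells out the tie-breaking and parallel-lines bookkeeping that the paper leaves implicit.
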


\begin{proof}
Since any two distinct line segments intersect at most once, and the
$u_{j}(t),$ $t\in\lbrack0,1],$ $j=1,\ldots,k,$ constitute $k$ line segments,
there is a partition as in (\ref{part}) such that no two distinct line
segments intersect in the open interval $(t_{i},t_{i+1}).$ Then, in each of
these intervals, which can be assumed closed by continuity, there is one line
segment bounding below all the line segments for that interval, and one
bounding above.
\end{proof}

\begin{lemma}
\label{l55}Let $A\in\mathcal{PC}_{n},$ $n\geq2,$ and $i\in\{1,\ldots,n\}.$
Suppose that $\mathcal{E}(A(i))$ is PWL connected. Then, $\mathcal{E}(A;i)$ is
PWL connected.
\end{lemma}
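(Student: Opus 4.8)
The plan is to prove PWL connectivity of $\mathcal{E}(A;i)$ by exhibiting, for any two vectors $w,v\in\mathcal{E}(A;i)$, an explicit piecewise linear path between them that stays inside $\mathcal{E}(A;i)$. Without loss of generality I would take $i=n$ (this is the natural normalization; strictly one could invoke a permutation similarity via Lemma \ref{lsim}, but since the statement is about a fixed $i$ it suffices to argue for that index with the roles of the last coordinate played by the $i$th). The key structural fact is that a vector belongs to $\mathcal{E}(A;n)$ exactly when its deleted subvector lies in $\mathcal{E}(A(n))$ and its last coordinate satisfies the sandwiching inequality of Theorem \ref{thext}. So a path in $\mathcal{E}(A;n)$ is really a path in $\mathcal{E}(A(n))$ (for the first $n-1$ coordinates) together with a compatible choice of the $n$th coordinate along the way.

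First I would use the hypothesis that $\mathcal{E}(A(n))$ is PWL connected to obtain, for the subvectors $w(n)$ and $v(n)$, a chain $u_{0}=w(n),u_{1},\ldots,u_{s}=v(n)$ in $\mathcal{E}(A(n))$ whose consecutive segments lie in $\mathcal{E}(A(n))$. This handles the first $n-1$ coordinates. The remaining task is to carry the $n$th coordinate along each segment so that Theorem \ref{thext} remains satisfied. On a single segment $t\mapsto tu_{j}+(1-t)u_{j+1}$, the functions $t\mapsto \frac{(tu_{j}+(1-t)u_{j+1})_{i}}{a_{in}}$ are linear in $t$, so by Lemma \ref{lcont} the upper and lower bounds $m_{\max}(t)$ and $m_{\min}(t)$ from Theorem \ref{thext} are piecewise linear and continuous, with a finite refinement of the parameter interval on which each is a single linear piece. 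Refining the partition of each segment accordingly, I obtain a finite partition of $[0,1]$ on which both bounding functions are linear, hence the admissible interval $[m_{\min}(t),m_{\max}(t)]$ for the last coordinate varies piecewise linearly.

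The main obstacle, and the heart of the argument, is choosing the $n$th coordinate as a piecewise linear function $f(t)$ with $m_{\min}(t)\le f(t)\le m_{\max}(t)$ for all $t$, matching the prescribed endpoint values $w_{n}$ at $t=0$ and $v_{n}$ at $t=1$, while ensuring that at each breakpoint the path remains continuous and that the resulting vertices $u_{0}',\ldots,u_{r}'$ are genuine points of $\mathcal{E}(A;n)$ connected by admissible segments. The natural choice is to define $f$ at each breakpoint $t_{\ell}$ to be any value in $[m_{\min}(t_{\ell}),m_{\max}(t_{\ell})]$ (for interior breakpoints one may simply pick, say, $m_{\min}(t_{\ell})$ or a convex combination), and to interpolate linearly between consecutive breakpoints. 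Linearity of $f$ on a piece together with linearity of $m_{\min}$ and $m_{\max}$ there guarantees $m_{\min}(t)\le f(t)\le m_{\max}(t)$ holds throughout the piece, because a linear function lying between two linear functions at both endpoints lies between them on the whole segment. Since each $u_{\ell}'=\big(\text{the corresponding point of }\mathcal{E}(A(n))\big)$ extended by $f(t_{\ell})$ satisfies Theorem \ref{thext}, it lies in $\mathcal{E}(A;n)$, and each connecting segment lies in $\mathcal{E}(A;n)$ by the same between-the-bounds argument applied via Theorem \ref{thext}.

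Finally I would assemble these pieces into a single PWL path from $w$ to $v$: concatenate the refined segments over all $j=0,\ldots,s-1$, using the vertices $u_{\ell}'$ just constructed. The endpoints match the given $w$ and $v$ because $f(0)=w_{n}$ and $f(1)=v_{n}$ by construction, and the first $n-1$ coordinates agree with the chosen chain in $\mathcal{E}(A(n))$. This shows any two elements of $\mathcal{E}(A;n)$ are PWL connected within $\mathcal{E}(A;n)$, which is the definition of PWL connectedness; the cases $n=2,3$ are immediate since $\mathcal{E}(A;i)$ is then convex (being cut out by finitely many linear inequalities, cf.\ Corollary \ref{c3por3}), and convex sets are PWL connected.
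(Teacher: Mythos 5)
Your proposal is correct and follows essentially the same route as the paper's proof: reduce, via the PWL connectivity hypothesis on $\mathcal{E}(A(i))$, to a single segment of subvectors, refine the parameter interval using Lemma \ref{lcont} so that the bounding functions from Theorem \ref{thext} are linear on each piece, and then join admissible extensions at the breakpoints by line segments lying in $\mathcal{E}(A;i)$. The only cosmetic difference is that you argue directly that a linear choice of the $i$th coordinate trapped between linear bounds at the endpoints of a piece stays trapped throughout, whereas the paper packages that same computation as Lemma \ref{ll1}.
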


\begin{proof}
By Lemma \ref{lsim}, we assume that $i=n.$ Since $\mathcal{E}(A(n))$ is PWL
connected, it is enough to show that, if $w(n)$ and $v(n)$ lie in a line
segment contained in $\mathcal{E}(A(n)),$ then all extensions $w,v\in
\mathcal{E}(A)$ of $w(n)$ and $v(n)$ are PWL connected in $\mathcal{E}(A),$
that is, there is a piecewise line segment in $\mathcal{E}(A)$ connecting $w$
and $v$. Suppose that, for every $t\in\lbrack0,1],$
\begin{equation}
z(n,t):=tw(n)+(1-t)v(n)\in\mathcal{E}(A(n)). \label{znt}%
\end{equation}
Using Lemma \ref{lcont} (taking $k=n-1$ and $u_{i}(t)=\frac{tw_{i}+(1-t)v_{i}%
}{a_{in}}$, in which $w_{i}$ and $v_{i}$ are the $i$th entry of $w(n)$ and
$v(n),$ respectively), there is a partition of $[0,1]$ as in (\ref{part}) such
that $z(n,t)$ has the same maximal and the same minimal $n$-indices for $A$
for any $t\in\lbrack t_{i},t_{i+1}],$ $i=0,\ldots,s-1.$ We have that
$t^{\prime}z(n,t_{i})+(1-t^{\prime})z(n,t_{i+1})\in\mathcal{E}(A(n))$ for any
$t^{\prime}\in\lbrack0,1],$ as this corresponds to a line segment contained in
the one defined by (\ref{znt}) for $t\in\lbrack0,1].$ Let $z(t_{i}),$
$z(t_{i+1})\in\mathcal{E}(A;n)$ be extensions of $z(n,t_{i})$ and
$z(n,t_{i+1})$, with $z(t_{0})=v$ and $z(t_{s})=w.$ By Lemma \ref{ll1},
$t^{\prime}z(t_{i})+(1-t^{\prime})z(t_{i+1})\in\mathcal{E}(A;n)$ for any
$t^{\prime}\in\lbrack0,1],$ $i=0,\ldots,s-1,$ that is, each line segment
connecting $z(t_{i})$ and $z(t_{i+1})$ lies in $\mathcal{E}(A;n).$ Since
$z(t_{s})=w$ and $z(t_{0})=v,$ the claim follows.
\end{proof}

\bigskip

The following result was noted in \cite{FJ5}.

\begin{lemma}
\label{thind}Let $A\in\mathcal{PC}_{n},$ with $n>2$, and $w\in\mathcal{V}_{n}%
$. If there are $i,j\in\{1,\ldots,n\},$ with $i\neq j,$ such that $w(i)$ is
efficient for $A(i)$ and $w(j)$ is efficient for $A(j),$ then $w$ is efficient
for $A.$
\end{lemma}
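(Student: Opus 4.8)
The plan is to translate the statement into graph theory via Theorem~\ref{blanq} and then argue by elementary path-concatenation. Set $G = G(A,w)$. By Theorem~\ref{blanq}, $w \in \mathcal{E}(A)$ is equivalent to $G$ being strongly connected, and, using the identity $G(A,w)(i) = G(A(i),w(i))$ noted before Theorem~\ref{tnbyn}, the two hypotheses say exactly that the induced subgraphs $G(i)$ and $G(j)$ are strongly connected. Hence it suffices to prove the purely combinatorial claim: for a digraph $G$ on $\{1,\ldots,n\}$ with $n \geq 3$, if $G(i)$ and $G(j)$ are strongly connected for some $i \neq j$, then $G$ is strongly connected, i.e.\ every ordered pair of distinct vertices $(u,v)$ admits a directed path $u \rightsquigarrow v$ in $G$.

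First I would dispatch most pairs directly. If $u,v \in \{1,\ldots,n\} \setminus \{i\}$, a directed path from $u$ to $v$ lives already inside the strongly connected subgraph $G(i)$, hence in $G$; symmetrically, if $u,v \neq j$, such a path lives inside $G(j)$. The only ordered pairs of distinct vertices escaping both cases are $(i,j)$ and $(j,i)$: avoiding the first case forces $i \in \{u,v\}$, and avoiding the second forces $j \in \{u,v\}$, so with $i \neq j$ the pair must satisfy $\{u,v\} = \{i,j\}$.

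These two pairs I would route through a third vertex, which exists precisely because $n \geq 3$: fix $k \in \{1,\ldots,n\} \setminus \{i,j\}$. Since $G(i)$ is strongly connected and contains $j$ and $k$, there is a path $j \rightsquigarrow k$ avoiding $i$; since $G(j)$ is strongly connected and contains $k$ and $i$, there is a path $k \rightsquigarrow i$ avoiding $j$. Concatenating gives a directed walk $j \rightsquigarrow k \rightsquigarrow i$ in $G$, and every walk contains a path, so $i$ is reachable from $j$. Interchanging the roles of $i$ and $j$ (using $G(j)$ for $i \rightsquigarrow k$ and $G(i)$ for $k \rightsquigarrow j$) gives a path $i \rightsquigarrow j$. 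Thus all pairs are connected and $G$ is strongly connected, whence $w \in \mathcal{E}(A)$.

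The argument is elementary, so there is no serious obstacle; the only points demanding care are the bookkeeping that isolates $(i,j)$ and $(j,i)$ as the sole pairs not already handled by $G(i)$ and $G(j)$, and the observation that $n \geq 3$ is exactly what supplies the connecting vertex $k$ (correspondingly, the hypothesis $n > 2$ cannot be dropped). I note that semi-completeness of $G(A,w)$ is never used here, so the conclusion in fact holds for any digraph satisfying the two strong-connectivity hypotheses.
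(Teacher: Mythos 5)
Your argument is correct. One caveat on the comparison: the paper does not prove Lemma~\ref{thind} at all --- it states the result and attributes it to the reference \cite{FJ5} (a submitted companion paper), so there is no in-paper proof to measure yours against. What you have written is a valid, self-contained derivation entirely within the framework the paper itself sets up: the reduction via Theorem~\ref{blanq} and the identity $G(A,w)(i)=G(A(i),w(i))$ is exactly the translation the paper uses elsewhere (e.g.\ in Theorem~\ref{tnbyn}), your case bookkeeping correctly isolates $(i,j)$ and $(j,i)$ as the only ordered pairs not already handled inside $G(i)$ or $G(j)$, and the routing of those two pairs through a third vertex $k$ is where $n>2$ enters, as you note. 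The concatenation step is sound (a walk always contains a path), and your remark that semi-completeness of $G(A,w)$ is never needed --- so the combinatorial core holds for arbitrary digraphs --- is a correct and mildly stronger observation than the lemma as stated. Your side remark that $n>2$ cannot be dropped is also right: for $n=2$ the subvector hypotheses are vacuously satisfied by every positive $w$, while only the multiples of a column are efficient.
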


Using the previous fact, we then get the following.

\begin{lemma}
\label{l1}Let $A\in\mathcal{PC}_{n},$ $n\geq4,$ and $i,j\in\{1,\ldots,n\},$
$i\neq j.$ Then there is $w\in\mathcal{E}(A)$ such that $w(i)\in
\mathcal{E}(A(i))$ and $w(j)\in\mathcal{E}(A(j)).$
\end{lemma}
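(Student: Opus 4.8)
I would prove this by exhibiting such a $w$ explicitly from a single column of $A$, exploiting two facts: every column of a reciprocal matrix is efficient for that matrix \cite{FJ1}, and deleting an index from a column of $A$ yields a column of the corresponding principal submatrix. The plan is therefore to pick one column index common to both $A(i)$ and $A(j)$, so that the same vector $w$ restricts to an efficient vector in each case.

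Concretely, since $n\geq 4$, the set $\{1,\ldots,n\}\setminus\{i,j\}$ is nonempty; fix any $k$ in it and let $w$ be the $k$th column of $A$, that is, $w=[a_{1k}\ \cdots\ a_{nk}]^{T}$. For the index $i\neq k$, the subvector $w(i)$ is obtained by deleting the $i$th entry, leaving the entries $a_{lk}$ with $l\neq i$; because $A(i)$ deletes row and column $i$ and $k$ survives (as $k\neq i$), these are exactly the entries of the $k$th column of $A(i)$. Hence $w(i)$ is a column of $A(i)$, and likewise $w(j)$ is a column of $A(j)$. Since $A(i)$ and $A(j)$ are reciprocal (a principal submatrix of a reciprocal matrix is reciprocal), the efficiency of columns gives $w(i)\in\mathcal{E}(A(i))$ and $w(j)\in\mathcal{E}(A(j))$.

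To finish, I would invoke Lemma \ref{thind} with the pair $i,j$: since $w(i)$ is efficient for $A(i)$ and $w(j)$ is efficient for $A(j)$, it follows that $w\in\mathcal{E}(A)$, which is the claim. (Alternatively, $w$ is itself the $k$th column of $A$ and so is directly efficient for $A$, but routing through Lemma \ref{thind} keeps the argument self-contained within the framework just developed.)

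There is no genuine obstacle here; the argument is short and existential, and the only thing to verify is the bookkeeping identity that the $i$-deleted $k$th column of $A$ coincides with the $k$th column of $A(i)$, which is immediate from the definition of a principal submatrix once $k\notin\{i,j\}$ is secured. I note in passing that the hypothesis $n\geq 4$ is stronger than needed for this particular statement: $n\geq 3$ already suffices to find $k\notin\{i,j\}$, and a single column $w$ serves simultaneously for both indices $i$ and $j$.
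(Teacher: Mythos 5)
Your proof is correct, but it takes a genuinely different route from the paper's. The paper starts from an efficient vector $v$ for the $(n-2)$-by-$(n-2)$ principal submatrix $A(\{i,j\})$, extends it twice via Theorem \ref{thext} to efficient vectors for $A(i)$ and $A(j)$ sharing the common stem $v$, and then assembles the full vector and invokes Lemma \ref{thind}; this stays entirely within the extension machinery of Section \ref{s44} and in fact produces a whole family of witnesses, one for each choice of $v$ and of the two extension entries. You instead take a single explicit witness, the $k$th column of $A$ for any $k\notin\{i,j\}$, and use the known fact that every column of a reciprocal matrix is efficient (cited in the paper from \cite{FJ1}) together with the bookkeeping identity that the $i$-deleted $k$th column of $A$ is the $k$th column of $A(i)$. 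Your argument is shorter, needs neither Theorem \ref{thext} nor (strictly) Lemma \ref{thind} since $w$ is itself a column of $A$, and correctly observes that $n\geq 3$ already suffices; what it gives up is the flexibility of the paper's construction, which exhibits many such $w$ rather than one canonical choice. Both proofs are valid.
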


\begin{proof}
By Lemma \ref{lsim}, we may assume, without loss of generality, that $i=n-1$
and $j=n.$ Choose $v\in\mathcal{E}(A(\{n-1,n\})).$ Let
\[
\left[
\begin{array}
[c]{cc}%
v^{T} & w_{n}%
\end{array}
\right]  ^{T}\qquad\text{and}\qquad\left[
\begin{array}
[c]{cc}%
v^{T} & w_{n-1}%
\end{array}
\right]  ^{T}%
\]
be extensions of $v$ to efficient vectors for $A(n-1)$ and $A(n),$
respectively, according to Theorem \ref{thext}. Then, by Lemma \ref{thind},
\[
w=\left[
\begin{array}
[c]{ccc}%
v^{T} & w_{n-1} & w_{n}%
\end{array}
\right]  ^{T}%
\]
is efficient for $A.$
\end{proof}

\bigskip

We now state the main results of this section.

\begin{theorem}
\label{mainconnected}Let $A\in\mathcal{PC}_{n}.$ Then, $\mathcal{E}(A)$ is PWL connected.
\end{theorem}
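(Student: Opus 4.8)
The plan is to prove this by induction on $n$, exploiting the decomposition
$\mathcal{E}(A)=\bigcup_{i=1}^{n}\mathcal{E}(A;i)$ from Corollary \ref{cunion}
together with the connectivity of each piece $\mathcal{E}(A;i)$ supplied by
Lemma \ref{l55}. The base case is $n\leq3$: for $n\leq2$ the set
$\mathcal{E}(A)$ consists of positive multiples of one vector, hence is convex
and PWL connected, and for $n=3$ Corollary \ref{c3por3} shows $\mathcal{E}(A)$
is defined by a finite system of linear inequalities, hence convex and PWL
connected. This anchors the induction.

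For the inductive step, assume $n\geq4$ and that $\mathcal{E}(B)$ is PWL
connected for every matrix $B\in\mathcal{PC}_{n-1}$. In particular, for each
$i\in\{1,\ldots,n\}$, the set $\mathcal{E}(A(i))$ is PWL connected since
$A(i)\in\mathcal{PC}_{n-1}$. By Lemma \ref{l55}, each $\mathcal{E}(A;i)$ is
then PWL connected. By Corollary \ref{cunion}, $\mathcal{E}(A)$ is the union
of these finitely many PWL connected sets. The remaining task is to show that
a finite union of PWL connected sets is PWL connected, which requires verifying
that the pieces overlap sufficiently to be chained together.

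The key step — and the main obstacle — is to establish that the pieces
$\mathcal{E}(A;i)$ are pairwise ``linked'' so that the union stays connected.
Here Lemma \ref{l1} is exactly the tool: for any pair $i\neq j$ it produces a
vector $w\in\mathcal{E}(A;i)\cap\mathcal{E}(A;j)$, so every two pieces share a
common point. Thus I would argue as follows. Given arbitrary
$x,y\in\mathcal{E}(A)$, pick $i$ with $x\in\mathcal{E}(A;i)$ and $j$ with
$y\in\mathcal{E}(A;j)$ (possible by Corollary \ref{cunion}). If $i=j$, then $x$
and $y$ are PWL connected within the single PWL connected set
$\mathcal{E}(A;i)$. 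If $i\neq j$, use Lemma \ref{l1} to obtain a point
$z\in\mathcal{E}(A;i)\cap\mathcal{E}(A;j)$; then $x$ is PWL connected to $z$
inside $\mathcal{E}(A;i)$, and $z$ is PWL connected to $y$ inside
$\mathcal{E}(A;j)$. Concatenating the two piecewise-linear paths (both of whose
images lie in $\mathcal{E}(A)$) yields a PWL path from $x$ to $y$ in
$\mathcal{E}(A)$. Since $x,y$ were arbitrary, $\mathcal{E}(A)$ is PWL
connected.

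The subtlety to watch is that PWL connectivity as defined requires every
connecting segment to lie in the ambient set $S=\mathcal{E}(A)$, not merely in
a subset; this is automatic here because each individual segment lies in one of
the $\mathcal{E}(A;i)\subseteq\mathcal{E}(A)$, so the concatenated breakpoint
sequence $u_{0},\ldots,u_{s}$ witnesses connectivity in $\mathcal{E}(A)$
itself. I expect the real content to sit in the already-proved Lemma
\ref{l55} and Lemma \ref{l1}; once those are in hand, the theorem is a clean
induction whose only genuine ingredient is the pairwise-intersection property
that guarantees the union of connected pieces does not fall apart.
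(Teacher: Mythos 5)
Your proposal is correct and follows essentially the same route as the paper: induction on $n$ with convexity for $n\leq3$ as the base case, Lemma \ref{l55} to make each piece $\mathcal{E}(A;i)$ PWL connected, and Lemma \ref{l1} to supply a common point of two pieces through which to chain the paths. The only cosmetic difference is that the paper first invokes Theorem \ref{tnbyn} to attach two indices to each of $u$ and $v$ and then splits into cases according to whether the index sets meet, whereas you go directly to Lemma \ref{l1} whenever the single chosen indices differ; both arguments are valid.
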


\begin{proof}
The proof is by induction on $n.$ For $n\leq3$ it has already been noticed
that $\mathcal{E}(A)$ is convex, and, thus, is PWL connected. Suppose that
$n>3$ and that, by the induction hypothesis, $\mathcal{E}(A(i))$ is PWL
connected, for all $i\in\{1,\ldots,n\}.$

Let $u,v\in\mathcal{E}(A).$ By Theorem \ref{tnbyn}, there are $i_{1}%
,i_{2},j_{1},j_{2}\in\{1,\ldots,n\},$ with $i_{1}\neq i_{2}$ and $j_{1}\neq
j_{2},$ such that $u\in\mathcal{E}(A;i_{1})\cap\mathcal{E}(A;i_{2})$ and
$v\in\mathcal{E}(A;j_{1})\cap\mathcal{E}(A;j_{2}).$

If there is $s\in\left\{  i_{1},i_{2}\right\}  \cap\left\{  j_{1}%
,j_{2}\right\}  ,$ then $u,v\in\mathcal{E}(A;s),$ which is PWL connected by
Lemma \ref{l55}.

If $\left\{  i_{1},i_{2}\right\}  \cap\left\{  j_{1},j_{2}\right\}  $ is
empty, by Lemma \ref{l1}, there is a $z\in\mathcal{E}(A;i_{1})\cap
\mathcal{E}(A;j_{1})$. Then, $u,z\in\mathcal{E}(A;i_{1})$ and $z,v\in
\mathcal{E}(A;j_{1}),$ and $\mathcal{E}(A;i_{1})$ and $\mathcal{E}(A;j_{1})$
are PWL connected by Lemma \ref{l55}.

Then $u,v\in\mathcal{E}(A)$ are PWL connected in $\mathcal{E}(A)$ and the
proof is complete.
\end{proof}

\begin{corollary}
\label{l2}Let $A\in\mathcal{PC}_{n},$ $n\geq2.$ Then, for any $i\in
\{1,\ldots,n\},$ $\mathcal{E}(A;i)$ is PWL connected.
\end{corollary}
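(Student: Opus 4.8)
The plan is to obtain this as an immediate consequence of the two tools already assembled, namely Theorem \ref{mainconnected} and Lemma \ref{l55}. Recall that $\mathcal{E}(A;i)$ is the set of $w\in\mathcal{E}(A)$ whose entry-deleted subvector $w(i)$ is efficient for the principal submatrix $A(i)$. Lemma \ref{l55} already reduces PWL connectivity of $\mathcal{E}(A;i)$ to PWL connectivity of $\mathcal{E}(A(i))$, so the entire task is to verify the hypothesis of that lemma.

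First I would fix $i\in\{1,\ldots,n\}$ and note that $A(i)\in\mathcal{PC}_{n-1}$, since every principal submatrix of a reciprocal matrix is again reciprocal, as recorded in the Background section. Applying Theorem \ref{mainconnected} to the matrix $A(i)$ then yields that $\mathcal{E}(A(i))$ is PWL connected. With this hypothesis verified, Lemma \ref{l55} applies directly and gives that $\mathcal{E}(A;i)$ is PWL connected, which completes the argument.

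The only point requiring a moment of care is the bottom of the size range, $n=2$, where $A(i)\in\mathcal{PC}_{1}$ and Theorem \ref{mainconnected} is being invoked on a $1$-by-$1$ matrix. Here $A(i)=[1]$ and every positive scalar is trivially efficient, so $\mathcal{E}(A(i))=\mathbb{R}_{+}$ is convex and hence PWL connected; thus the hypothesis of Lemma \ref{l55} holds either by reading Theorem \ref{mainconnected} in this degenerate size or simply by this direct observation. I do not anticipate a genuine obstacle: all of the substance is carried by Theorem \ref{mainconnected} and Lemma \ref{l55}, and the corollary is essentially the remark that these two results dovetail once Theorem \ref{mainconnected} is read off for the submatrix $A(i)$ rather than for $A$ itself.
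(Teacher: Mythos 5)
Your proof is correct and is essentially identical to the paper's: both apply Theorem \ref{mainconnected} to $A(i)$ to get PWL connectivity of $\mathcal{E}(A(i))$ and then invoke Lemma \ref{l55}. Your extra remark about the degenerate case $n=2$ (where $A(i)$ is $1$-by-$1$) is a reasonable clarification the paper leaves implicit, but it does not change the argument.
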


\begin{proof}
By Theorem \ref{mainconnected}, $\mathcal{E}(A(i))$ is PWL connected,
implying, by Lemma \ref{l55}, that $\mathcal{E}(A;i)$ is PWL connected.
\end{proof}

\bigskip

Though, for $n>3,$ $\mathcal{E}(A;i)$ is connected, it may happen that it is
not convex, as the next example shows.

\begin{example}
Consider the matrix%
\[
A=\left[
\begin{array}
[c]{cccc}%
1 & 1 & 5 & 4\\
1 & 1 & 1 & 1\\
\frac{1}{5} & 1 & 1 & 6\\
\frac{1}{4} & 1 & \frac{1}{6} & 1
\end{array}
\right]  .
\]
The set $\mathcal{E}(A(4))$ is convex as $A(4)\in\mathcal{PC}_{3},$ and, by
Corollary \ref{c3por3}, $\left[
\begin{array}
[c]{ccc}%
w_{1} & w_{2} & w_{3}%
\end{array}
\right]  ^{T}\in\mathcal{E}(A(4))$ if and only if%
\[
w_{3}\leq w_{2}\leq w_{1}\leq5w_{3}.
\]
Also, by Theorem \ref{thext}, $w\in\mathcal{E}(A;4)$ if and only if%
\[
\min\left\{  \frac{w_{1}}{4},w_{2},\frac{w_{3}}{6}\right\}  \leq w_{4}\leq
\max\left\{  \frac{w_{1}}{4},w_{2},\frac{w_{3}}{6}\right\}  .
\]
For example,
\[
u(4):=\left[
\begin{array}
[c]{ccc}%
5 & 1 & 1
\end{array}
\right]  ^{T},\text{ }v(4):=\left[
\begin{array}
[c]{ccc}%
3 & 1 & 1
\end{array}
\right]  ^{T}\in\mathcal{E}(A(4)).
\]
and, for any $t\in(0,1),$
\[
tu(4)+(1-t)v(4)\in\mathcal{E}(A(4)),
\]
as $\mathcal{E}(A(4))$ is convex.
\end{example}

By Theorem \ref{thext},
\[
u=\left[
\begin{array}
[c]{cccc}%
5 & 1 & 1 & \frac{5}{4}%
\end{array}
\right]  ^{T},\text{ }v=\left[
\begin{array}
[c]{cccc}%
3 & 1 & 1 & 1
\end{array}
\right]  ^{T}\in\mathcal{E}(A;4),
\]
as%
\begin{equation}
\min\left\{  \frac{5}{4},1,\frac{1}{6}\right\}  \leq\frac{5}{4}\leq
\max\left\{  \frac{5}{4},1,\frac{1}{6}\right\}  , \label{f1}%
\end{equation}
and%
\begin{equation}
\min\left\{  \frac{3}{4},1,\frac{1}{6}\right\}  \leq1\leq\max\left\{  \frac
{3}{4},1,\frac{1}{6}\right\}  . \label{f2}%
\end{equation}
Note that $u$ and $v$ are extensions of $u(4)$ and $v(4)$, respectively.
However, by Theorem \ref{thext}, for any $t\in(0,1)$ sufficiently close to
$1,$
\[
tu+(1-t)v\notin\mathcal{E}(A)
\]
as
\[
\frac{5}{4}t+(1-t)>\frac{5}{4}t+(1-t)\frac{3}{4}=\max\left\{  \frac{5}%
{4}t+(1-t)\frac{3}{4},1,\frac{1}{6}\right\}  .
\]
The reason why this happens is that $u(4)$ and $v(4)$ do not have the same
maximal $n$-index for $A$ (Lemma \ref{ll1}).

\section{Efficient vectors for a $4$-by-$4$ reciprocal matrix\label{sec4x4}}

Based on Corollary \ref{cunion}, we construct all the efficient vectors for an
arbitrary matrix $A\in\mathcal{PC}_{4}.$ Using Corollary \ref{c3por3}, we
first obtain the set of efficient vectors $\mathcal{E}(A(i))$ for each
$3$-by-$3$ principal submatrix $A(i)$ of $A$ ($i=1,2,3,4$). Then,
$\mathcal{E}(A)$ is the union of the sets $\mathcal{E}(A;i)$ of efficient
vectors for $A$ that extend the efficient vectors for $A(i),$ according to
Theorem \ref{thext}.

Let%
\[
A=\left[
\begin{array}
[c]{cccc}%
1 & a_{12} & a_{13} & a_{14}\\
\frac{1}{a_{12}} & 1 & a_{23} & a_{24}\\
\frac{1}{a_{13}} & \frac{1}{a_{23}} & 1 & a_{34}\\
\frac{1}{a_{14}} & \frac{1}{a_{24}} & \frac{1}{a_{34}} & 1
\end{array}
\right]  \in\mathcal{PC}_{4}.
\]
The four $3$-by-$3$ principal submatrices of $A\ $are%
\begin{align*}
A(1)  &  =\left[
\begin{array}
[c]{ccc}%
1 & a_{23} & a_{24}\\
\frac{1}{a_{23}} & 1 & a_{34}\\
\frac{1}{a_{24}} & \frac{1}{a_{34}} & 1
\end{array}
\right]  ,\text{ }A(2)=\left[
\begin{array}
[c]{ccc}%
1 & a_{13} & a_{14}\\
\frac{1}{a_{13}} & 1 & a_{34}\\
\frac{1}{a_{14}} & \frac{1}{a_{34}} & 1
\end{array}
\right]  ,\text{ }\\
& \\
A(3)  &  =\left[
\begin{array}
[c]{ccc}%
1 & a_{12} & a_{14}\\
\frac{1}{a_{12}} & 1 & a_{24}\\
\frac{1}{a_{14}} & \frac{1}{a_{24}} & 1
\end{array}
\right]  ,\text{ }A(4)=\left[
\begin{array}
[c]{ccc}%
1 & a_{12} & a_{13}\\
\frac{1}{a_{12}} & 1 & a_{23}\\
\frac{1}{a_{13}} & \frac{1}{a_{23}} & 1
\end{array}
\right]  .
\end{align*}
Denote $w=\left[
\begin{array}
[c]{cccc}%
w_{1} & w_{2} & w_{3} & w_{4}%
\end{array}
\right]  ^{T}\in\mathcal{V}_{4}$ and let $w(i)$ be the vector obtained from
$w$ by deleting the $i$th entry. By Corollary \ref{c3por3}, we have%
\[
\mathcal{E}(A(1))=\left\{  w(1):a_{34}w_{4}\leq w_{3}\leq\frac{w_{2}}{a_{23}%
}\leq\frac{a_{24}}{a_{23}}w_{4}\text{ or }a_{34}w_{4}\geq w_{3}\geq\frac
{w_{2}}{a_{23}}\geq\frac{a_{24}}{a_{23}}w_{4}\right\}  ,
\]%
\[
\mathcal{E}(A(2))=\left\{  w(2):a_{34}w_{4}\leq w_{3}\leq\frac{w_{1}}{a_{13}%
}\leq\frac{a_{14}}{a_{13}}w_{4}\text{ or }a_{34}w_{4}\geq w_{3}\geq\frac
{w_{1}}{a_{13}}\geq\frac{a_{14}}{a_{13}}w_{4}\right\}  ,
\]%
\[
\mathcal{E}(A(3))=\left\{  w(3):a_{24}w_{4}\leq w_{2}\leq\frac{w_{1}}{a_{12}%
}\leq\frac{a_{14}}{a_{12}}w_{4}\text{ or }a_{24}w_{4}\geq w_{2}\geq\frac
{w_{1}}{a_{12}}\geq\frac{a_{14}}{a_{12}}w_{4}\right\}  ,
\]%
\[
\mathcal{E}(A(4))=\left\{  w(4):a_{23}w_{3}\leq w_{2}\leq\frac{w_{1}}{a_{12}%
}\leq\frac{a_{13}}{a_{12}}w_{3}\text{ or }a_{23}w_{3}\geq w_{2}\geq\frac
{w_{1}}{a_{12}}\geq\frac{a_{13}}{a_{12}}w_{3}\right\}  .
\]
Next, using Theorem \ref{thext}, we extend the vectors in $\mathcal{E}(A(i)),$
$i=1,2,3,4,$ to efficient vectors for $A,$ giving the set $\mathcal{E}(A;i).$
We have%
\[
\mathcal{E}(A;1)=\left\{  w:w(1)\in\mathcal{E}(A(1))\text{ and }\min\left\{
a_{12}w_{2},a_{13}w_{3},a_{14}w_{4}\right\}  \leq w_{1}\leq\max\left\{
a_{12}w_{2},a_{13}w_{3},a_{14}w_{4}\right\}  \right\}  ,
\]%
\[
\mathcal{E}(A;2)=\left\{  w:w(2)\in\mathcal{E}(A(2))\text{ and }\min\left\{
\frac{w_{1}}{a_{12}},a_{23}w_{3},a_{24}w_{4}\right\}  \leq w_{2}\leq
\max\left\{  \frac{w_{1}}{a_{12}},a_{23}w_{3},a_{24}w_{4}\right\}  \right\}
,
\]%
\[
\mathcal{E}(A;3)=\left\{  w:w(3)\in\mathcal{E}(A(3))\text{ and }\min\left\{
\frac{w_{1}}{a_{13}},\frac{w_{2}}{a_{23}},a_{34}w_{4}\right\}  \leq w_{3}%
\leq\max\left\{  \frac{w_{1}}{a_{13}},\frac{w_{2}}{a_{23}},a_{34}%
w_{4}\right\}  \right\}  ,
\]%
\[
\mathcal{E}(A;4)=\left\{  w:w(4)\in\mathcal{E}(A(4))\text{ and }\min\left\{
\frac{w_{1}}{a_{14}},\frac{w_{2}}{a_{24}},\frac{w_{3}}{a_{34}}\right\}  \leq
w_{4}\leq\max\left\{  \frac{w_{1}}{a_{14}},\frac{w_{2}}{a_{24}},\frac{w_{3}%
}{a_{34}}\right\}  \right\}  .
\]
Finally, by Corollary \ref{cunion}, we have%
\begin{align}
\mathcal{E}(A)  &  =%
{\displaystyle\bigcup\limits_{i=1}^{4}}
\mathcal{E}(A;i)=%
{\displaystyle\bigcup\limits_{i=2}^{4}}
\mathcal{E}(A;i)=%
{\displaystyle\bigcup\limits_{i=1,\text{ }i\neq2}^{4}}
\mathcal{E}(A;i)\label{uni}\\
&  =%
{\displaystyle\bigcup\limits_{i=1,\text{ }i\neq3}^{4}}
\mathcal{E}(A;i)=%
{\displaystyle\bigcup\limits_{i=1}^{3}}
\mathcal{E}(A;i). \label{uni2}%
\end{align}

\begin{example}
\label{ex4by4}We determine all the efficient vectors for%
\[
A=\left[
\begin{array}
[c]{cccc}%
1 & 2 & 3 & 1\\
\frac{1}{2} & 1 & \frac{1}{2} & 1\\
\frac{1}{3} & 2 & 1 & 1\\
1 & 1 & 1 & 1
\end{array}
\right]  .
\]
We have
\[
\mathcal{E}(A(1))=\left\{  \left[
\begin{array}
[c]{ccc}%
w_{2} & w_{3} & w_{4}%
\end{array}
\right]  ^{T}:w_{4}\leq w_{3}\leq2w_{2}\leq2w_{4}\right\}  ,
\]%
\[
\mathcal{E}(A(2))=\left\{  \left[
\begin{array}
[c]{ccc}%
w_{1} & w_{3} & w_{4}%
\end{array}
\right]  ^{T}:w_{4}\geq w_{3}\geq\frac{w_{1}}{3}\geq\frac{w_{4}}{3}\right\}
,
\]%
\[
\mathcal{E}(A(3))=\left\{  \left[
\begin{array}
[c]{ccc}%
w_{1} & w_{2} & w_{4}%
\end{array}
\right]  ^{T}:w_{4}\geq w_{2}\geq\frac{w_{1}}{2}\geq\frac{w_{4}}{2}\right\}
,
\]%
\[
\mathcal{E}(A(4))=\left\{  \left[
\begin{array}
[c]{ccc}%
w_{1} & w_{2} & w_{3}%
\end{array}
\right]  ^{T}:\frac{w_{3}}{2}\leq w_{2}\leq\frac{w_{1}}{2}\leq\frac{3w_{3}}%
{2}\right\}  .
\]
Then,
\[
\mathcal{E}(A;1)=\left\{  w:w(1)\in\mathcal{E}(A(1))\text{ and }\min\left\{
2w_{2},3w_{3},w_{4}\right\}  \leq w_{1}\leq\max\left\{  2w_{2},3w_{3}%
,w_{4}\right\}  \right\}  ,
\]%
\[
\mathcal{E}(A;2)=\left\{  w:w(2)\in\mathcal{E}(A(2))\text{ and }\min\left\{
\frac{w_{1}}{2},\frac{w_{3}}{2},w_{4}\right\}  \leq w_{2}\leq\max\left\{
\frac{w_{1}}{2},\frac{w_{3}}{2},w_{4}\right\}  \right\}  ,
\]%
\[
\mathcal{E}(A;3)=\left\{  w:w(3)\in\mathcal{E}(A(3))\text{ and }\min\left\{
\frac{w_{1}}{3},2w_{2},w_{4}\right\}  \leq w_{3}\leq\max\left\{  \frac{w_{1}%
}{3},2w_{2},w_{4}\right\}  \right\}  ,
\]%
\[
\mathcal{E}(A;4)=\left\{  w:w(4)\in\mathcal{E}(A(4))\text{ and }\min\left\{
w_{1},w_{2},w_{3}\right\}  \leq w_{4}\leq\max\left\{  w_{1},w_{2}%
,w_{3}\right\}  \right\}  .
\]

\medskip

The sets $\mathcal{E}(A;i)$ can be simplified to:%
\[
\mathcal{E}(A;1)=\left\{  w:w(1)\in\mathcal{E}(A(1))\text{ and }w_{4}\leq
w_{1}\leq\max\left\{  2w_{2},3w_{3}\right\}  \right\}  ,
\]%
\[
\mathcal{E}(A;2)=\left\{  w:w(2)\in\mathcal{E}(A(2))\text{ and }\min\left\{
\frac{w_{1}}{2},\frac{w_{3}}{2}\right\}  \leq w_{2}\leq\max\left\{
\frac{w_{1}}{2},w_{4}\right\}  \right\}  ,
\]%
\[
\mathcal{E}(A;3)=\left\{  w:w(3)\in\mathcal{E}(A(3))\text{ and }\frac{w_{1}%
}{3}\leq w_{3}\leq2w_{2}\right\}  ,
\]%
\[
\mathcal{E}(A;4)=\left\{  w:w(4)\in\mathcal{E}(A(4))\text{ and }\min\left\{
w_{2},w_{3}\right\}  \leq w_{4}\leq w_{1}\right\}  .
\]

\medskip Then, $\mathcal{E}(A)$ is as in (\ref{uni})-(\ref{uni2})$.$ Note that
any vector in $\mathcal{E}(A)$ should be in at least two sets $\mathcal{E}%
(A;i).$

\medskip Examples of subsets of $\mathcal{E}(A)$ are%
\begin{gather*}
\left\{  \left[
\begin{array}
[c]{cccc}%
w_{1} & 4 & 6 & 5
\end{array}
\right]  ^{T}:5\leq w_{1}\leq18\right\}  \subset\mathcal{E}(A;1)\subset
\mathcal{E}(A),\\
\left\{  \left[
\begin{array}
[c]{cccc}%
15 & w_{2} & 8 & 12
\end{array}
\right]  ^{T}:4\leq w_{2}\leq12\right\}  \subset\mathcal{E}(A;2)\subset
\mathcal{E}(A),\\
\left\{  \left[
\begin{array}
[c]{cccc}%
13 & 8 & w_{3} & 12
\end{array}
\right]  ^{T}:\frac{13}{3}\leq w_{3}\leq16\right\}  \subset\mathcal{E}%
(A;3)\subset\mathcal{E}(A),\\
\left\{  \left[
\begin{array}
[c]{cccc}%
5 & 2 & 4 & w_{4}%
\end{array}
\right]  ^{T}:2\leq w_{4}\leq5\right\}  \subset\mathcal{E}(A;4)\subset
\mathcal{E}(A).
\end{gather*}

\medskip We next notice that no subset $\mathcal{E}(A;i)$ is contained in the
union of two of the remaining three subsets. In particular, $\mathcal{E}(A)$
is not the union of just two subsets $\mathcal{E}(A;i)$. For example (we write
next $(w_{1},w_{2},w_{3},w_{4})$ for $\left[
\begin{array}
[c]{cccc}%
w_{1} & w_{2} & w_{3} & w_{4}%
\end{array}
\right]  ^{T}$),
\begin{align*}
(5,4,6,5),(9,4,6,5)  &  \in\mathcal{E}(A;1),\text{ }(5,4,6,5)\notin
\mathcal{E}(A;2)\cup\mathcal{E}(A;4),\text{ }(9,4,6,5)\notin\mathcal{E}%
(A;3),\\
(15,4,8,12),\text{ }(3,2,1,2)  &  \in\mathcal{E}(A;2),\text{ }%
(15,4,8,12)\notin\mathcal{E}(A;1)\cup\mathcal{E}(A;3),\text{ }(3,2,1,2)\notin
\mathcal{E}(A;4),\\
(13,8,7,12),(13,8,16,12)  &  \in\mathcal{E}(A;3),\text{ }(13,8,7,12)\notin
\mathcal{E}(A;1)\cup\mathcal{E}(A;4),\text{ }(13,8,16,12)\notin\mathcal{E}%
(A;2),\\
(5,2,4,2),(8,2,3,4)  &  \in\mathcal{E}(A;4),\text{ }(5,2,4,2)\notin
\mathcal{E}(A;2)\cup\mathcal{E}(A;3),\text{ }(8,2,3,4)\notin\mathcal{E}(A;1).
\end{align*}

\medskip Finally, we note that the Perron eigenvector (normalized) of $A$ is%
\[
\left[
\begin{array}
[c]{cccc}%
1.\,\allowbreak5997 & 0.7018 & 0.9134 & 1
\end{array}
\right]  ^{T}\in\mathcal{E}(A;2)\cap\mathcal{E}(A;4)\allowbreak
\]
and the Hadamard geometric mean of the columns of $A$ is%
\[
\left[
\begin{array}
[c]{cccc}%
1.\,\allowbreak5651 & 0.7071 & 0.9036 & 1
\end{array}
\right]  ^{T}\in\mathcal{E}(A;2)\cap\mathcal{E}(A;4)\allowbreak.\allowbreak
\]
Neither of these vectors lies in $\mathcal{E}(A;1)\cup\mathcal{E}%
(A;3)\allowbreak.$
\end{example}

\section{Conclusions\label{s6}$\allowbreak$}

Heretofore, there has been no known way to generate all efficient vectors for
each reciprocal matrix, only constructions of particular ones. Here we show
how to generate the complete set of efficient vectors for an arbitrary
reciprocal matrix. This is facilitated by the fact that, if $w$ is an
efficient vector for an $n$-by-$n$ reciprocal matrix $A,$ with $n\geq4,$ there
are at least two $(n-1)$-by-$(n-1)$ principal submatrices of $A$ for which the
vector obtained from $w$ by deleting the corresponding entry is efficient, and
on a method to extend these subvectors to efficient vectors for $A$. Based on
our result, we show that the set of efficient vectors for a reciprocal matrix,
as well as the subsets that appear in its description, are piecewise linearly
connected. The efficient vectors for a $4$-by-$4$ reciprocal matrix are
explicitly given. Several examples illustrating the theoretical results are provided.

The given inductive method to construct efficient vectors provides a way to
produce efficient vectors in practice and plays an important role in solving
many questions concerning the study of efficient vectors for reciprocal
matrices. Among other possible applications, it can be used in the
construction of explicit classes of efficient vectors for some block-perturbed
consistent matrices and, possibly, in the study of the existence of rank reversals.

\bigskip

\bigskip

\end{document}